%


\documentclass[12pt]{article}
\usepackage{ct}



\usepackage{tikz}
\usetikzlibrary{calc}
\usepackage{placeins}
\usepackage[labelformat=empty]{subfig}


\specs{13}{4 (2)}{2024}{}

\dateline{May 29, 2023}{Jun 7, 2024}{Sep 30, 2024}

\keywords{Tilings, aperiodic order, polyforms}

\MSC{05B45, 52C20, 05B50}

\title{A chiral aperiodic monotile}


\author[1]{David Smith}
\author[2]{Joseph Samuel Myers}
\author[3]{Craig S. Kaplan}
\author[4]{Chaim Goodman-Strauss}

\affil[1]{%
Yorkshire, U.K.

\email{ds.orangery@gmail.com}%
}

\affil[2]{%
Cambridge, U.K.

\email{jsm@polyomino.org.uk}%
}

\affil[3]{%
School of Computer Science, University of Waterloo, Waterloo, Ontario, Canada

\email{csk@uwaterloo.ca}%
}

\affil[4]{%
National Museum of Mathematics, New York, New York, U.S.A.

\email{chaimgoodmanstrauss@gmail.com}%
}

%



\newcommand{\fignum     } [1] {\ref{#1}}
\newcommand{\fig        } [1] {Figure~\fignum{#1}}
\graphicspath{{figures/}}

\newcommand{\vcoords}[2]{($sqrt(0.75)*(#1,0)+0.5*(0,#1)+(0,#2)$)}
\newcommand{\hcoords}[2]{($sqrt(12)*(#1,0)+sqrt(3)*(#2,0)+3*(0,#2)+sqrt(3)*(1,0)+(0,1)$)}
\newcommand{\markpt}[2]{%
  \filldraw \vcoords{#1}{#2} circle [radius=3pt]
}
\newcommand{\vctxt}[3]{%
  \draw \vcoords{#1}{#2} node {#3}
}

\newcommand{\rawtileA}[6]{%
  #1[#2shift={\hcoords{#4}{#5}},rotate=#3]
    \vcoords{0}{0} -- \vcoords{-2}{1} -- \vcoords{-2}{0} --
    \vcoords{0}{-2} -- \vcoords{1}{-2} -- \vcoords{2}{-4} --
    \vcoords{4}{-5} -- \vcoords{4}{-4} -- \vcoords{3}{-3} --
    \vcoords{4}{-2} -- \vcoords{3}{0} -- \vcoords{2}{0} --
    \vcoords{1}{1} -- cycle;
  \draw[shift={\hcoords{#4}{#5}},rotate=#3] \vcoords{2}{-1} node {#6}
}

\newcommand{\rawtileB}[6]{%
  #1[#2shift={\hcoords{#4}{#5}},rotate=#3]
    \vcoords{0}{0} -- \vcoords{-2}{1} -- \vcoords{-2}{0} --
    \vcoords{-3}{0} -- \vcoords{-2}{-2} -- \vcoords{2}{-4} --
    \vcoords{3}{-3} -- \vcoords{4}{-4} -- \vcoords{5}{-4} --
    \vcoords{4}{-2} -- \vcoords{2}{-1} -- \vcoords{2}{0} --
    \vcoords{1}{1} -- cycle;
  \draw[shift={\hcoords{#4}{#5}},rotate=#3] \vcoords{0}{-2} node {#6}
}

\newcommand{\colgrid}{lightgray}

\renewcommand{\markpt}[2]{%
  \filldraw \vcoords{#1}{#2} circle [radius=2.5pt]
}

\definecolor{lightergray}{rgb}{0.93, 0.93, 0.93}
\definecolor{mysticupper}{rgb}{0.73, 0.73, 0.58}
\definecolor{mysticlower}{rgb}{0.85, 0.86, 0.78}
\definecolor{deltacolor}{rgb}{0.64, 0.59, 0.52}
\definecolor{sigmacolor}{rgb}{0.75, 0.57, 0.49}
\definecolor{gammacolor}{rgb}{0.8, 0.62, 0.49}
\definecolor{xicolor}{rgb}{0.83, 0.69, 0.56} 
\definecolor{picolor}{rgb}{0.86, 0.77, 0.63} 
\definecolor{phicolor}{rgb}{0.89, 0.84, 0.66}
\definecolor{psicolor}{rgb}{0.88, 0.88, 0.61} 
\definecolor{thetacolor}{rgb}{0.82, 0.84, 0.59}
\definecolor{lambdacolor}{rgb}{0.72, 0.8, 0.7}

\newcommand{\hexsevenB}[1]{\rawtileB{\draw}{\colgrid,fill=mysticupper,}{0}{0}{0}{#1};
  \rawtileA{\draw}{\colgrid,}{120}{1}{0}{};
  \rawtileA{\draw}{\colgrid,}{180}{0}{1}{};
  \rawtileA{\draw}{\colgrid,}{60}{-1}{0}{};
  \rawtileA{\draw}{\colgrid,}{-60}{0}{-1}{};
  \rawtileA{\draw}{\colgrid,}{0}{1}{-1}{};
  \rawtileA{\draw}{\colgrid,}{60}{2}{-1}{};
  \draw[shift=\hcoords{0}{0}, line width = 1.5pt]
  \vcoords{-5}{1} -- \vcoords{-4}{0} -- \vcoords{-4}{-1} -- \vcoords{-4}{-2} -- \vcoords{-3}{-3} -- \vcoords{-4}{-4} -- \vcoords{-3}{-6} -- \vcoords{-2}{-6} -- \vcoords{-2}{-5} -- \vcoords{0}{-6} -- \vcoords{1}{-5} -- \vcoords{2}{-6} -- \vcoords{3}{-6} -- \vcoords{4}{-8} -- \vcoords{6}{-9} -- \vcoords{6}{-8} -- \vcoords{7}{-8} -- \vcoords{8}{-8} -- \vcoords{8}{-7} -- \vcoords{10}{-8} -- \vcoords{11}{-7} -- \vcoords{10}{-6} -- \vcoords{9}{-6} -- \vcoords{8}{-4} -- \vcoords{6}{-3} -- \vcoords{6}{-2} -- \vcoords{5}{-1} -- \vcoords{6}{0} -- \vcoords{5}{2} -- \vcoords{4}{2} -- \vcoords{3}{3} -- \vcoords{2}{4} -- \vcoords{1}{4} -- \vcoords{0}{6} -- \vcoords{-2}{7} -- \vcoords{-2}{6} -- \vcoords{-1}{5} -- \vcoords{-2}{4} -- \vcoords{-4}{5} -- \vcoords{-4}{4} -- \vcoords{-5}{4} -- \vcoords{-4}{2}
  -- cycle}
  \newcommand{\hexeightB}[1]{\rawtileB{\draw}{\colgrid,fill=mysticupper,}{0}{0}{0}{#1};
  \rawtileA{\draw}{\colgrid,}{120}{1}{0}{};
  \rawtileA{\draw}{\colgrid,}{180}{0}{1}{};
  \rawtileA{\draw}{\colgrid,fill=mysticlower,}{60}{-1}{0}{};
  \rawtileA{\draw}{\colgrid,}{-60}{0}{-1}{};
  \rawtileA{\draw}{\colgrid,}{0}{1}{-1}{};
  \rawtileA{\draw}{\colgrid,}{0}{1}{-2}{};
  \rawtileA{\draw}{\colgrid,}{60}{2}{-1}{};
  \draw[shift=\hcoords{0}{0},  line width = 1.5pt]
  \vcoords{-5}{1} -- \vcoords{-4}{0} -- \vcoords{-4}{-1} -- \vcoords{-4}{-2} -- \vcoords{-3}{-3} -- \vcoords{-4}{-4} -- \vcoords{-3}{-6} -- \vcoords{-2}{-6} -- \vcoords{-1}{-7} -- \vcoords{0}{-8} -- \vcoords{1}{-8} -- \vcoords{2}{-10} -- \vcoords{4}{-11} -- \vcoords{4}{-10} -- \vcoords{3}{-9} -- \vcoords{4}{-8} -- \vcoords{6}{-9} -- \vcoords{6}{-8} -- \vcoords{7}{-8} -- \vcoords{8}{-8} -- \vcoords{8}{-7} -- \vcoords{10}{-8} -- \vcoords{11}{-7} -- \vcoords{10}{-6} -- \vcoords{9}{-6} -- \vcoords{8}{-4} -- \vcoords{6}{-3} -- \vcoords{6}{-2} -- \vcoords{5}{-1} -- \vcoords{6}{0} -- \vcoords{5}{2} -- \vcoords{4}{2} -- \vcoords{3}{3} -- \vcoords{2}{4} -- \vcoords{1}{4} -- \vcoords{0}{6} -- \vcoords{-2}{7} -- \vcoords{-2}{6} -- \vcoords{-1}{5} -- \vcoords{-2}{4} -- \vcoords{-4}{5} -- \vcoords{-4}{4} -- \vcoords{-5}{4} -- \vcoords{-4}{2}
  -- cycle}
  \newcommand{\hexnineB}[1]{\rawtileB{\draw}{\colgrid,fill=mysticupper,}{0}{0}{0}{#1};
  \rawtileA{\draw}{\colgrid,}{120}{1}{0}{};
  \rawtileA{\draw}{\colgrid,}{180}{0}{1}{};
  \rawtileA{\draw}{\colgrid,fill=mysticlower,}{60}{-1}{0}{};
  \rawtileA{\draw}{\colgrid,}{-60}{0}{-1}{};
  \rawtileA{\draw}{\colgrid,}{0}{1}{-1}{};
  \rawtileA{\draw}{\colgrid,}{0}{1}{-2}{};
  \rawtileA{\draw}{\colgrid,}{60}{2}{-1}{};
  \rawtileA{\draw}{\colgrid,}{120}{2}{0}{};
  \draw[shift=\hcoords{0}{0}, line width = 1.5pt]
  \vcoords{-5}{1} -- \vcoords{-4}{0} -- \vcoords{-4}{-1} -- \vcoords{-4}{-2} -- \vcoords{-3}{-3} -- \vcoords{-4}{-4} -- \vcoords{-3}{-6} -- \vcoords{-2}{-6} -- \vcoords{-1}{-7} -- \vcoords{0}{-8} -- \vcoords{1}{-8} -- \vcoords{2}{-10} -- \vcoords{4}{-11} -- \vcoords{4}{-10} -- \vcoords{3}{-9} -- \vcoords{4}{-8} -- \vcoords{6}{-9} -- \vcoords{6}{-8} -- \vcoords{7}{-8} -- \vcoords{8}{-8} -- \vcoords{8}{-7} -- \vcoords{10}{-8} -- \vcoords{11}{-7} -- \vcoords{10}{-6} -- \vcoords{10}{-5} -- \vcoords{10}{-4} -- \vcoords{9}{-3} -- \vcoords{10}{-2} -- \vcoords{9}{0} -- \vcoords{8}{0} -- \vcoords{8}{-1} -- \vcoords{6}{0} -- \vcoords{5}{2} -- \vcoords{4}{2} -- \vcoords{3}{3} -- \vcoords{2}{4} -- \vcoords{1}{4} -- \vcoords{0}{6} -- \vcoords{-2}{7} -- \vcoords{-2}{6} -- \vcoords{-1}{5} -- \vcoords{-2}{4} -- \vcoords{-4}{5} -- \vcoords{-4}{4} -- \vcoords{-5}{4} -- \vcoords{-4}{2}
  -- cycle}
\newcommand{\edgealphaplus}[4]{%
  \draw[shift={\hcoords{#3}{#4}},rotate=#1] [rotate=#2] \vcoords{-2}{0} -- \vcoords{-2}{2};
  \fill[shift={\hcoords{#3}{#4}},rotate=#1] [rotate=#2] \vcoords{-2}{0.75} -- \vcoords{-2}{1.25} -- \vcoords{-2.5}{1.25} -- cycle
}
\newcommand{\edgealphaminus}[4]{%
  \draw[shift={\hcoords{#3}{#4}},rotate=#1] [rotate=#2] \vcoords{-2}{0} -- \vcoords{-2}{2};
  \fill[shift={\hcoords{#3}{#4}},rotate=#1] [rotate=#2] \vcoords{-2}{0.75} -- \vcoords{-2}{1.25} -- \vcoords{-1.5}{0.75} -- cycle
}
\newcommand{\edgebetaplus}[4]{%
  \draw[shift={\hcoords{#3}{#4}},rotate=#1] [rotate=#2] \vcoords{-2}{0} -- \vcoords{-2}{2};
  \fill[shift={\hcoords{#3}{#4}},rotate=#1] [rotate=#2] \vcoords{-2}{1.25} arc[start angle=90,delta angle=180,radius=0.25] -- cycle
}
\newcommand{\edgebetaminus}[4]{%
  \draw[shift={\hcoords{#3}{#4}},rotate=#1] [rotate=#2] \vcoords{-2}{0} -- \vcoords{-2}{2};
  \fill[shift={\hcoords{#3}{#4}},rotate=#1] [rotate=#2] \vcoords{-2}{0.75} arc[start angle=-90,delta angle=180,radius=0.25] -- cycle
}
\newcommand{\edgegammaplus}[4]{%
  \draw[shift={\hcoords{#3}{#4}},rotate=#1] [rotate=#2] \vcoords{-2}{0} -- \vcoords{-2}{2};
  \draw[shift={\hcoords{#3}{#4}},rotate=#1] [rotate=#2] \vcoords{-2}{0.75} -- \vcoords{-2}{1.25} -- \vcoords{-2.5}{1.25} -- cycle
}
\newcommand{\edgegammaminus}[4]{%
  \draw[shift={\hcoords{#3}{#4}},rotate=#1] [rotate=#2] \vcoords{-2}{0} -- \vcoords{-2}{2};
  \draw[shift={\hcoords{#3}{#4}},rotate=#1] [rotate=#2] \vcoords{-2}{0.75} -- \vcoords{-2}{1.25} -- \vcoords{-1.5}{0.75} -- cycle
}
\newcommand{\edgedeltaplus}[4]{%
  \draw[shift={\hcoords{#3}{#4}},rotate=#1] [rotate=#2] \vcoords{-2}{0} -- \vcoords{-2}{2};
  \draw[shift={\hcoords{#3}{#4}},rotate=#1] [rotate=#2] \vcoords{-2}{1.25} -- \vcoords{-2.5}{1.5};
  \draw[shift={\hcoords{#3}{#4}},rotate=#1] [rotate=#2] \vcoords{-2}{0.75} -- \vcoords{-2.5}{1}
}
\newcommand{\edgedeltaminus}[4]{%
  \draw[shift={\hcoords{#3}{#4}},rotate=#1] [rotate=#2] \vcoords{-2}{0} -- \vcoords{-2}{2};
  \draw[shift={\hcoords{#3}{#4}},rotate=#1] [rotate=#2] \vcoords{-2}{1.25} -- \vcoords{-1.5}{1};
  \draw[shift={\hcoords{#3}{#4}},rotate=#1] [rotate=#2] \vcoords{-2}{0.75} -- \vcoords{-1.5}{0.5}
}
\newcommand{\edgeepsilonplus}[4]{%
  \draw[shift={\hcoords{#3}{#4}},rotate=#1] [rotate=#2] \vcoords{-2}{0} -- \vcoords{-2}{2};
  \draw[shift={\hcoords{#3}{#4}},rotate=#1] [rotate=#2] \vcoords{-2}{1} -- \vcoords{-2.5}{1.25}
}
\newcommand{\edgeepsilonminus}[4]{%
  \draw[shift={\hcoords{#3}{#4}},rotate=#1] [rotate=#2] \vcoords{-2}{0} -- \vcoords{-2}{2};
  \draw[shift={\hcoords{#3}{#4}},rotate=#1] [rotate=#2] \vcoords{-2}{1} -- \vcoords{-1.5}{0.75}
}
\newcommand{\edgezetaplus}[4]{%
  \draw[shift={\hcoords{#3}{#4}},rotate=#1] [rotate=#2] \vcoords{-2}{0} -- \vcoords{-2}{2};
  \draw[shift={\hcoords{#3}{#4}},rotate=#1] [rotate=#2] \vcoords{-2}{1.25} -- \vcoords{-2.5}{1.5};
  \draw[shift={\hcoords{#3}{#4}},rotate=#1] [rotate=#2] \vcoords{-2}{1} -- \vcoords{-2.5}{1.25};
  \draw[shift={\hcoords{#3}{#4}},rotate=#1] [rotate=#2] \vcoords{-2}{0.75} -- \vcoords{-2.5}{1}
}
\newcommand{\edgezetaminus}[4]{%
  \draw[shift={\hcoords{#3}{#4}},rotate=#1] [rotate=#2] \vcoords{-2}{0} -- \vcoords{-2}{2};
  \draw[shift={\hcoords{#3}{#4}},rotate=#1] [rotate=#2] \vcoords{-2}{1.25} -- \vcoords{-1.5}{1};
  \draw[shift={\hcoords{#3}{#4}},rotate=#1] [rotate=#2] \vcoords{-2}{1} -- \vcoords{-1.5}{0.75};
  \draw[shift={\hcoords{#3}{#4}},rotate=#1] [rotate=#2] \vcoords{-2}{0.75} -- \vcoords{-1.5}{0.5}
}
\newcommand{\edgeeta}[4]{%
  \draw[shift={\hcoords{#3}{#4}},rotate=#1] [rotate=#2] \vcoords{-2}{0} -- \vcoords{-2}{2}
}
\newcommand{\edgethetaplus}[4]{%
  \draw[shift={\hcoords{#3}{#4}},rotate=#1] [rotate=#2] \vcoords{-2}{0} -- \vcoords{-2}{2};
  \draw[shift={\hcoords{#3}{#4}},rotate=#1] [rotate=#2] \vcoords{-2}{1.25} arc[start angle=90,delta angle=180,radius=0.25] -- cycle
}
\newcommand{\edgethetaminus}[4]{%
  \draw[shift={\hcoords{#3}{#4}},rotate=#1] [rotate=#2] \vcoords{-2}{0} -- \vcoords{-2}{2};
  \draw[shift={\hcoords{#3}{#4}},rotate=#1] [rotate=#2] \vcoords{-2}{0.75} arc[start angle=-90,delta angle=180,radius=0.25] -- cycle
}
\newcommand{\hexGamma}[3]{%
\fill[gammacolor,shift={\hcoords{#2}{#3}},rotate=#1] \vcoords{-2}{2} --
    \vcoords{-2}{0} -- \vcoords{0}{-2} -- \vcoords{2}{-2} --
    \vcoords{2}{0} -- \vcoords{0}{2} -- cycle;
  \edgealphaminus{#1}{0}{#2}{#3};
  \edgealphaplus{#1}{60}{#2}{#3};
  \edgegammaminus{#1}{120}{#2}{#3};
  \edgedeltaminus{#1}{180}{#2}{#3};
  \edgebetaplus{#1}{240}{#2}{#3};
  \edgebetaminus{#1}{300}{#2}{#3};
  \draw[shift={\hcoords{#2}{#3}},rotate=#1] \vcoords{0}{0} node[transform shape] {$\Gamma$}
}
\newcommand{\hexDelta}[3]{%
  \fill[deltacolor,shift={\hcoords{#2}{#3}},rotate=#1] \vcoords{-2}{2} --
    \vcoords{-2}{0} -- \vcoords{0}{-2} -- \vcoords{2}{-2} --
    \vcoords{2}{0} -- \vcoords{0}{2} -- cycle;
  \edgegammaplus{#1}{0}{#2}{#3};
  \edgebetaplus{#1}{60}{#2}{#3};
  \edgeepsilonminus{#1}{120}{#2}{#3};
  \edgealphaplus{#1}{180}{#2}{#3};
  \edgegammaminus{#1}{240}{#2}{#3};
  \edgezetaminus{#1}{300}{#2}{#3};
  \draw[shift={\hcoords{#2}{#3}},rotate=#1] \vcoords{0}{0} node[transform shape] {$\Delta$}
}
\newcommand{\hexTheta}[3]{%
  \fill[thetacolor,shift={\hcoords{#2}{#3}},rotate=#1] \vcoords{-2}{2} --
    \vcoords{-2}{0} -- \vcoords{0}{-2} -- \vcoords{2}{-2} --
    \vcoords{2}{0} -- \vcoords{0}{2} -- cycle;
  \edgegammaplus{#1}{0}{#2}{#3};
  \edgebetaplus{#1}{60}{#2}{#3};
  \edgethetaplus{#1}{120}{#2}{#3};
  \edgebetaplus{#1}{180}{#2}{#3};
  \edgeeta{#1}{240}{#2}{#3};
  \edgebetaminus{#1}{300}{#2}{#3};
  \draw[shift={\hcoords{#2}{#3}},rotate=#1] \vcoords{0}{0} node[transform shape] {$\underline{\Theta}$}
}
\newcommand{\hexLambda}[3]{%
  \fill[lambdacolor,shift={\hcoords{#2}{#3}},rotate=#1] \vcoords{-2}{2} --
    \vcoords{-2}{0} -- \vcoords{0}{-2} -- \vcoords{2}{-2} --
    \vcoords{2}{0} -- \vcoords{0}{2} -- cycle;
  \edgegammaplus{#1}{0}{#2}{#3};
  \edgebetaplus{#1}{60}{#2}{#3};
  \edgeepsilonminus{#1}{120}{#2}{#3};
  \edgealphaplus{#1}{180}{#2}{#3};
  \edgethetaminus{#1}{240}{#2}{#3};
  \edgebetaminus{#1}{300}{#2}{#3};
  \draw[shift={\hcoords{#2}{#3}},rotate=#1] \vcoords{0}{0} node[transform shape] {$\Lambda$}
}
\newcommand{\hexXi}[3]{%
  \fill[xicolor,shift={\hcoords{#2}{#3}},rotate=#1] \vcoords{-2}{2} --
    \vcoords{-2}{0} -- \vcoords{0}{-2} -- \vcoords{2}{-2} --
    \vcoords{2}{0} -- \vcoords{0}{2} -- cycle;
  \edgealphaminus{#1}{0}{#2}{#3};
  \edgeepsilonplus{#1}{60}{#2}{#3};
  \edgethetaplus{#1}{120}{#2}{#3};
  \edgebetaplus{#1}{180}{#2}{#3};
  \edgeeta{#1}{240}{#2}{#3};
  \edgebetaminus{#1}{300}{#2}{#3};
  \draw[shift={\hcoords{#2}{#3}},rotate=#1] \vcoords{0}{0} node[transform shape] {$\underline{\Xi}$}
}
\newcommand{\hexPi}[3]{%
  \fill[picolor,shift={\hcoords{#2}{#3}},rotate=#1] \vcoords{-2}{2} --
    \vcoords{-2}{0} -- \vcoords{0}{-2} -- \vcoords{2}{-2} --
    \vcoords{2}{0} -- \vcoords{0}{2} -- cycle;
  \edgealphaminus{#1}{0}{#2}{#3};
  \edgeepsilonplus{#1}{60}{#2}{#3};
  \edgeepsilonminus{#1}{120}{#2}{#3};
  \edgealphaplus{#1}{180}{#2}{#3};
  \edgethetaminus{#1}{240}{#2}{#3};
  \edgebetaminus{#1}{300}{#2}{#3};
  \draw[shift={\hcoords{#2}{#3}},rotate=#1] \vcoords{0}{0} node[transform shape] {$\Pi$}
}
\newcommand{\hexSigma}[3]{%
  \fill[sigmacolor,shift={\hcoords{#2}{#3}},rotate=#1] \vcoords{-2}{2} --
    \vcoords{-2}{0} -- \vcoords{0}{-2} -- \vcoords{2}{-2} --
    \vcoords{2}{0} -- \vcoords{0}{2} -- cycle;
  \edgezetaplus{#1}{0}{#2}{#3};
  \edgebetaplus{#1}{60}{#2}{#3};
  \edgeepsilonminus{#1}{120}{#2}{#3};
  \edgealphaplus{#1}{180}{#2}{#3};
  \edgegammaminus{#1}{240}{#2}{#3};
  \edgedeltaplus{#1}{300}{#2}{#3};
  \draw[shift={\hcoords{#2}{#3}},rotate=#1] \vcoords{0}{0} node[transform shape] {$\Sigma$}
}
\newcommand{\hexPhi}[3]{%
  \fill[phicolor,shift={\hcoords{#2}{#3}},rotate=#1] \vcoords{-2}{2} --
    \vcoords{-2}{0} -- \vcoords{0}{-2} -- \vcoords{2}{-2} --
    \vcoords{2}{0} -- \vcoords{0}{2} -- cycle;
  \edgegammaplus{#1}{0}{#2}{#3};
  \edgebetaplus{#1}{60}{#2}{#3};
  \edgeepsilonminus{#1}{120}{#2}{#3};
  \edgeepsilonplus{#1}{180}{#2}{#3};
  \edgeeta{#1}{240}{#2}{#3};
  \edgebetaminus{#1}{300}{#2}{#3};
  \draw[shift={\hcoords{#2}{#3}},rotate=#1] \vcoords{0}{0} node[transform shape] {$\underline{\Phi}$}
}
\newcommand{\hexPsi}[3]{%
  \fill[psicolor,shift={\hcoords{#2}{#3}},rotate=#1] \vcoords{-2}{2} --
    \vcoords{-2}{0} -- \vcoords{0}{-2} -- \vcoords{2}{-2} --
    \vcoords{2}{0} -- \vcoords{0}{2} -- cycle;
  \edgealphaminus{#1}{0}{#2}{#3};
  \edgeepsilonplus{#1}{60}{#2}{#3};
  \edgeepsilonminus{#1}{120}{#2}{#3};
  \edgeepsilonplus{#1}{180}{#2}{#3};
  \edgeeta{#1}{240}{#2}{#3};
  \edgebetaminus{#1}{300}{#2}{#3};
  \draw[shift={\hcoords{#2}{#3}},rotate=#1] \vcoords{0}{0} node[transform shape] {$\Psi$}
}

\extrafloats{100}
\maxdeadcycles=500


\begin{document}

\maketitle


\begin{abstract}
	The recently discovered ``hat'' aperiodic monotile mixes unreflected
	and reflected tiles in every tiling it admits, leaving open the question
	of whether a single shape can tile aperiodically using translations
	and rotations alone.  We show that a close relative of the hat---the 
	equilateral member of the continuum to which it belongs---is
	a weakly chiral aperiodic monotile: it admits only non-periodic 
	tilings if we forbid reflections by fiat.  Furthermore, by modifying
	this polygon's edges we obtain a family of shapes called Spectres
	that are strictly chiral aperiodic monotiles: they admit only homochiral
	non-periodic tilings based on a hierarchical substitution system.
\end{abstract}



\section{Introduction}
\label{sec:intro}

The recently discovered ``hat'' aperiodic monotile admits tilings
of the plane, but none that are periodic~\cite{hat}.  This 
polygon settles the question of whether a single 
shape---a closed topological disk in the plane---can
tile aperiodically without any additional matching
rules or other constraints on tile placement.

The hat is asymmetric: it is not equal to its image under any
non-trivial isometry of the plane.  In particular, a hat cannot
be brought into perfect correspondence with its own mirror reflections. 
Moreover, all tilings formed by copies of the hat must 
use both unreflected and reflected tiles.  
Some people have wondered whether the hat and its reflection
ought to be considered two distinct shapes, thereby invalidating 
its status as a monotile.  To some extent, this question is about 
tiles as physical objects rather than mathematical abstractions.
A hat cut from paper or plastic can easily be turned over
in three dimensions to obtain its reflection, but a glazed ceramic
tile cannot.  More broadly, a wide range of three-dimensional objects,
from organic molecules to shoes, behave very differently in their left-
and right-handed forms.

Since Felix Klein introduced the {\em Erlangen} program~\cite{greenberg},
each metric geometry has been defined by first choosing a set 
of isometries, the rigid
motions of the geometric space in which we are working.  
Isometries may be classified as \emph{orientation-preserving}, which
map left-handed shapes to left-handed shapes (and right-handed to
right-handed), or
\emph{orientation-reversing}, which exchange left- and right-handedness.
In the plane, the orientation-preserving isometries comprise translations
and rotations.  We will refer to all orientation-reversing isometries
generically as ``reflections'', and the image of a shape under a reflection
as a ``reflected'' copy of that shape.

From that point of view, the question of whether a single tile admits
monohedral tilings of the plane depends upon the set of 
isometries that may be used to transform tiles.
If this set is not specified, one may reasonably assume that the full
set of planar isometries is intended.
Gr\"unbaum and Shephard explicitly permit reflections in their definition
of monohedral tilings~\cite[Section 1.2]{GS}, so that when they later
ask for a tile that ``only admits monohedral non-periodic 
tilings''~\cite[Section 10.7]{GS}, we can be confident that they 
considered reflected tiles to be in play.
Ammann's aperiodic pairs of tiles were considered congruent under
reflections~\cite{AGS}, and the disconnected aperiodic Taylor--Socolar
tile~\cite{ST1,ST2} also requires reflections to tile the plane.
Likewise, we regard the hat as an aperiodic monotile.
Still, the fact that every tiling by hats mixes unreflected and
reflected tiles leaves open the question of whether
there might exist a monotile that achieves aperiodicity without the use
of reflections.  

A \emph{tile} $T$ is a closed topological disk in the plane, and a
\emph{monohedral} \emph{tiling} admitted by it is a countable collection
$\mathcal{T} = \{T_1, T_2,\ldots \}$ of congruent copies of $T$ with
disjoint interiors, whose
union is the entire plane.  Each $T_i$ is of the form $g_iT$ for
some planar isometry $g_i$.  We say that a monohedral tiling
$\mathcal{T}$ is a \emph{homochiral tiling} if for
every pair $T_i, T_j \in \mathcal{T}$, there is an orientation-preserving
isometry mapping $T_i$ to $T_j$.  (Note that a monohedral tiling whose
tile has bilateral symmetry is automatically homochiral according to this
definition.)
We then define a \emph{weakly chiral
aperiodic monotile} to be a tile whose homochiral tilings are all
non-periodic (and that admits at least one such tiling), and a
\emph{strictly chiral aperiodic monotile} to be a tile that admits
\emph{only} homochiral non-periodic tilings.  Following Klein, the weak
case is aperiodic if we decree reflections to be off limits, even
if the tile admits periodic tilings when reflections are allowed.
The strict case remains both chiral and aperiodic in the presence
of reflections.  With these definitions in hand, we ask: \emph{Do
there exist any weakly or strictly chiral aperiodic monotiles?}\footnote{We
might call this the ``vampire einstein'' problem, as we are seeking
a shape that is not accompanied by its reflection.}

The discovery of the hat is an effective reminder of how little
we understand the possibilities and subtleties of monohedral
tilings.  Certainly there is no evidence to suggest that the hat
(and the continuum of shapes to which it belongs) is somehow unique,
and we might therefore hope that a zoo of interesting new monotiles
will emerge in its wake.  In this context, a chiral aperiodic
monotile does not seem particularly unlikely.
Nonetheless, we did not expect to find one so close at hand:
a solution follows in short order from the hat.

\begin{figure}
\begin{center}
	\includegraphics[width=.9\textwidth]{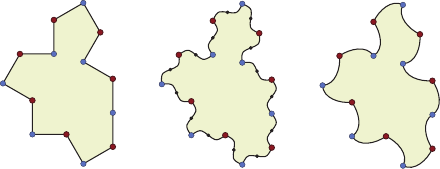}
\end{center}
\caption{ \label{fig:polygonspectre} 
	The $14$-sided polygon $\mathrm{Tile}(1,1)$, shown on the left, 
	is a weakly chiral aperiodic monotile: if by fiat we forbid tilings
	that mix unreflected and reflected tiles, then it admits only
	non-periodic tilings. By modifying its edges, as shown in the 
	centre and right for example, we obtain strictly chiral aperiodic
	monotiles called ``Spectres'' that admit only non-periodic 
	tilings even when reflections are permitted.}
\end{figure}

\begin{figure}
\begin{center}
	\includegraphics[width=\textwidth]{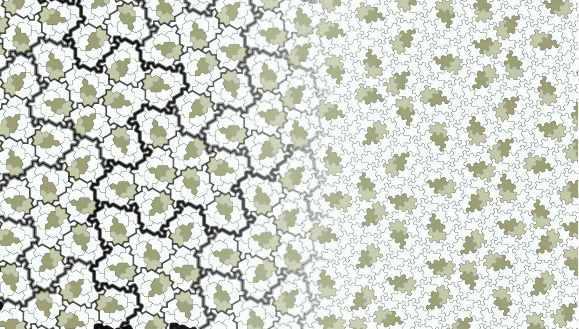}
\end{center}
\caption{ \label{fig:sampler} 
	A patch from a non-periodic tiling by Spectres.  On the left,
	tiles are drawn as copies of $\mathrm{Tile}(1,1)$, and thickened
	outlines show the hierarchy of supertiles to which these tiles
	will be shown to belong.
	On the right, tile boundaries are modified in a manner similar to 
	Figure~\ref{fig:polygonspectre} (right).  Tile colours will be
	explained later in the paper.}
\end{figure}

Our previous work~\cite{hat} related the discovery of two separate
aperiodic polykites---the hat, and a $10$-kite known as the
``turtle''.  We showed that these two shapes are members of a
continuum parameterized by choices of two non-negative edge lengths
$a$ and $b$. Each member of that continuum is a $14$-sided polygon (with
two collinear sides) denoted
$\mathrm{Tile}(a,b)$; the hat is $\mathrm{Tile}(1,\sqrt{3})$ and
the turtle is $\mathrm{Tile}(\sqrt{3},1)$.  All members of this
continuum are aperiodic monotiles, with three exceptions: $\mathrm{Tile}(0,1)$
(the ``chevron''), $\mathrm{Tile}(1,0)$ (the ``comet''), and the
equilateral polygon $\mathrm{Tile}(1,1)$ (Figure~\ref{fig:polygonspectre}, 
left).

Because $\mathrm{Tile}(1,1)$ is equilateral, copies of the tile may
fit together in more ways than generic members of the continuum,
and in fact it admits a simple periodic tiling using equal numbers
of left- and right-handed tiles~\cite[Figure 6.2]{hat}.  But what
happens if we rein in that freedom slightly?  In this paper we prove
that $\mathrm{Tile}(1,1)$ is a weakly chiral aperiodic monotile: if
by fiat we restrict ourselves to tilings using only translations
and rotations, then $\mathrm{Tile}(1,1)$ admits only non-periodic
tilings.  More importantly, by modifying the edges of this polygon
(Figure~\ref{fig:polygonspectre}, centre and right), we define
 a family of strictly chiral aperiodic monotiles that we call ``Spectres'',
which admit only homochiral tilings, even when reflections
are permitted (Theorem~\ref{maintheorem}).
Figure~\ref{fig:sampler} shows non-periodic tilings by 
$\mathrm{Tile}(1,1)$ (left) and a Spectre (right).

\section{The Spectre and its tilings}
\label{sec:construction}

Our main results concern Spectres, the set of shapes whose tilings
correspond exactly to the homochiral tilings admitted by
$\mathrm{Tile}(1,1)$,
thereby boosting us from weakly chiral to strictly chiral aperiodicity. 
It is helpful
to begin by giving a precise definition of this set.

We regard $\mathrm{Tile}(1,1)$ as an
equilateral polygon with $14$ unit-length edges and $14$ vertices, where one of 
those vertices lies between two collinear edges.  A tile $X$ is a
\emph{Spectre} if and only if

\begin{itemize}
	\item $X$ admits only homochiral tilings;
	\item Every tiling admitted by $X$ corresponds to one by
		$\mathrm{Tile}(1,1)$: if $\{g_iX\}$ is a tiling for a set of 
		isometries $\{g_i\}$, then $\{g_i\mathrm{Tile}(1,1)\}$ is also a 
		tiling;
	\item Every homochiral tiling admitted by $\mathrm{Tile}(1,1)$ corresponds
		to one by $X$: if $\{g_i\mathrm{Tile}(1,1)\}$ is a homochiral tiling,
		then $\{g_iX\}$ is also a tiling; and
	\item If $\{g_iX\}$ is a tiling, then that tiling and 
		$\{g_i\mathrm{Tile}(1,1)\}$
		have the same tiling vertices (points shared by three or more
		tiles).
\end{itemize}
    
We do not attempt to characterize the space of all Spectres, but 
we can assert that the space is non-empty.

\begin{lemma} 
\label{existlemma}
There exists a Spectre.\end{lemma}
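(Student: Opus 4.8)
The plan is to prove the lemma by exhibiting an explicit Spectre, obtained from $\mathrm{Tile}(1,1)$ by replacing every straight edge with one fixed non-straight curve. Concretely, I would fix a curve $c$ joining the two endpoints of a unit segment that is invariant under the $180^\circ$ rotation about its midpoint (a point-symmetric, $S$-shaped bump) but is \emph{not} invariant under any reflection; the profile $y=\epsilon\sin(2\pi x)$ over the unit interval is a model case. I then decorate all $14$ edges of $\mathrm{Tile}(1,1)$ with a congruent copy of $c$, placed so that, traversing the boundary counterclockwise, each copy deviates into the interior by the same intrinsic profile (the two collinear edges each receive their own copy). Call the result $X$. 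For $\epsilon$ small it is a closed topological disk, and its corners are exactly the $14$ vertices of $\mathrm{Tile}(1,1)$; Figure~\ref{fig:polygonspectre} depicts modifications of this kind.

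The key structural input is that the chiral tilings of $\mathrm{Tile}(1,1)$ are \emph{edge-to-edge}: each unit edge is shared in full with a single edge of one neighbour. Granting this, the central computation is the following. If two like-handed tiles $g_A X$ and $g_B X$ share an edge $S$, then $h=g_A^{-1}g_B$ is orientation-preserving and carries $S$ onto itself with reversed traversal, so the two tiles traverse $S$ in opposite directions with interiors on opposite sides; because $c$ is point-symmetric, the curve laid down along $S$ by $g_A$ coincides as a point set with the curve laid down by $g_B$, so the tiles mate. Hence every like-handed edge-adjacency is honoured automatically, which gives the third defining condition at once: lifting any chiral tiling $\{g_i\mathrm{Tile}(1,1)\}$ to $\{g_i X\}$ yields a valid tiling. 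Running the same computation with one tile reflected makes $h$ orientation-reversing, so it sends $c$ to its mirror image; by the reflection-asymmetry of $c$ this differs from $c$ (the bump flips sides), and an unreflected tile can never share an edge with a reflected one. Since the adjacency graph of any planar tiling is connected, all tiles in a tiling by $X$ have one handedness, which is exactly the first defining condition; at least one such tiling exists by the lift just described.

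For the second and fourth conditions I would show that every tiling by $X$ is itself edge-to-edge and, upon straightening each curve $c$ back to its chord, becomes a tiling by $\mathrm{Tile}(1,1)$ with unchanged corner set. The point-symmetric bump acts as a jigsaw key: a convex lobe protruding from one tile can only be filled by the complementary concave lobe of a matching edge placed at the full unit offset, so a vertex of one tile cannot abut the interior of another tile's edge without leaving an unfillable gap, forcing edge-to-edge contact. Straightening all edges simultaneously replaces each $g_iX$ by $g_i\mathrm{Tile}(1,1)$; because the perturbations on any shared edge are exactly complementary, the straightened tiles again have disjoint interiors and cover the plane, giving the second condition, and because the corners are untouched the two tilings share precisely the same tiling vertices, giving the fourth.

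The main obstacle is exactly this edge-to-edge control, in two places. First, one must know—or establish from the grid structure of the hat family—that the chiral tilings of $\mathrm{Tile}(1,1)$ meet full edge to full edge, so that the automatic point-symmetric matching applies to \emph{every} adjacency. Second, and more delicate, one must verify that the chosen $c$ is rigid enough to forbid all non-edge-to-edge and all mixed-handedness local configurations, so that no tiling by $X$ admits an adjacency absent from the chiral tilings of $\mathrm{Tile}(1,1)$, while remaining shallow enough that $X$ stays a simple disk and every existing chiral adjacency survives. Pinning down a single explicit $c$ that achieves rigidity and asymmetry at once, and checking it against the finite list of local edge configurations arising in these tilings, is the technical heart of the construction.
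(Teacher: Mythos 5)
Your construction is essentially the paper's first construction: an s-curve (a simple arc symmetric under $180^\circ$ rotation about its midpoint, your $y=\epsilon\sin(2\pi x)$ being a valid instance) replaces each of the $14$ unit edges, with the point symmetry guaranteeing that like-handed tiles mate along shared edges and reflection-asymmetry (which is in fact automatic for any non-straight s-curve) blocking mixed-handedness adjacencies. The two obstacles you flag are resolved in the paper just as you anticipate: a short angle count at consecutive vertices shows every $\mathrm{Tile}(1,1)$ tiling is vertex-to-vertex with maximal straight boundary segments of length $1$ or $2$, and smoothness of the replacement curve then forces every tiling by the decorated tile to align along whole edge copies.
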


\begin{proof}
We present two explicit constructions that yield families of Spectres, 
where the first is a special case of the second.

Note first that any tiling by $\mathrm{Tile(1,1)}$ is
\emph{vertex-to-vertex}: no vertex of a tile can lie in the interior
of an edge of an adjacent tile (where we explicitly separate the
long side of $\mathrm{Tile}(1,1)$ into two short edges, introducing
a vertex with an interior angle of $180^\circ$).  By considering
angles at consecutive vertices of the tile, we can see that any
maximal line segment in the union of the boundaries of the tiles
in a tiling has length $1$ or~$2$ (and in particular is finite),
forcing the edges on either side of that segment to be aligned.

We call a simple  planar path an \emph{s-curve} if it is
symmetric under $180^\circ$ rotation about its centre.  Choose a
smooth, non-straight s-curve $\sigma$, and construct a new tile $S$ by
replacing every edge of~$\mathrm{Tile}(1,1)$ by a translated and
rotated copy of $\sigma$, fit to the edge's vertices.
As shown in Figure~\ref{fig:polygonspectre} (centre), we can choose
$\sigma$ in such a way that we obtain a topological disk $S$.
Because the path is smooth, an argument similar to the one
above guarantees that every tiling by $S$ will be aligned along 
whole copies of $\sigma$, and therefore correspond to a tiling by 
$\mathrm{Tile}(1,1)$ with the same tiling vertices.  Furthermore, 
the symmetry of $\sigma$ prevents 
$S$ and its reflection from being adjacent in a tiling, 
meaning that the only possible
tilings by $S$ correspond to homochiral tilings by $\mathrm{Tile}(1,1)$.
Thus $S$ is a Spectre.

The s-curves above are in some sense the minimally invasive way to
``chiralize'' a polygon that admits vertex-to-vertex tilings, 
preserving as many of its legal adjacencies
as possible while forcing homochirality.  However, we can take
advantage of additional structure in $\mathrm{Tile}(1,1)$ to offer a
more general construction.  Observe that the interior angles at
the vertices of $\mathrm{Tile}(1,1)$ strictly alternate between
multiples of~$90^\circ$ and multiples of~$120^\circ$ (coloured blue
and red, respectively, in Figure~\ref{fig:polygonspectre}).  In any
tiling by $\mathrm{Tile}(1,1)$, colours must agree where vertices
of neighbouring tiles meet.  Thus we can choose \emph{any} (simple,
 non-straight, smooth) path~$\gamma$ and replace the edges
of~$\mathrm{Tile}(1,1)$ by copies of~$\gamma$, in such a way that
copies associated with consecutive edges of the tile are related
by a rotation around the vertex between them (so that the paths
alternately face ``in'' and ``out'').  As before, provided
that $\gamma$~is chosen so that this process yields a topological
disk, as in Figure~\ref{fig:polygonspectre} (right), that shape
will be a Spectre.  This construction subsumes the first one, because
the two possible ways to affix an s-curve to an edge yield identical
results.
\end{proof}

From this point on, we will not take the trouble to distinguish between 
$\mathrm{Tile}(1,1)$ restricted by fiat to homochiral tilings, and the
Spectres defined above, which only admit homochiral tilings---we know that
we are working with the same universe of tilings in either case. 
For simplicity we will usually speak of ``the'' Spectre in reference to any
of these shapes, and we will use the polygon $\mathrm{Tile}(1,1)$ in 
figures showing patches of Spectres.  However, if we refer to the Spectre 
in the context of strictly chiral aperiodicity, it should be understood that
we are excluding $\mathrm{Tile}(1,1)$.

\subsection{Main result}

When a substitution tiling is aperiodic, the proof of aperiodicity
often relies in some way on showing that the tiles are nested within
an infinite hierarchical superstructure in every tiling they admit.
We say that a set of tiles is \emph{hierarchical} if, in every
tiling admitted by those tiles, every tile is nested within an
infinite hierarchy of ever-larger supertiles~\cite{aht}.  If these
hierarchies are uniquely determined, then the tilings that contain
them must be non-periodic~\cite[Theorem~10.1.1]{GS}.  (For suppose
a hierarchical tiling $\mathcal{T}$ is periodic, meaning that there exists
a non-zero vector $\mathbf{v}$ for which 
$\mathcal{T}+\mathbf{v}=\mathcal{T}$.
Then there would exist some supertile $\mathcal{C}$ that is so large that
it has a non-empty intersection with $\mathcal{C}+\mathbf{v}$.
Every tile $T$ that lies in that intersection belongs to two distinct
supertiles at the same level of the hierarchy, contradicting the
uniqueness of that hierarchy.)
This approach to aperiodicity informs our main result.

\begin{theorem}\label{maintheorem} The Spectre admits a tiling, and in 
	any tiling it admits, each tile is contained within an infinite,
	unique hierarchy of larger and larger supertiles.  Thus the
	Spectre is a strictly chiral aperiodic monotile.
\end{theorem}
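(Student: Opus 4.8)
The plan is to prove Theorem~\ref{maintheorem} by exhibiting an explicit \emph{substitution system} for the Spectre and then showing that this system is \emph{forced}: that every tiling decomposes, in exactly one way, into the hierarchy of supertiles produced by the substitution. Concretely, I would first define a finite alphabet of labelled clusters---assembled from small numbers of Spectres, as in the thickened supertile outlines of \fig{fig:sampler}---together with a substitution rule $\omega$ that replaces each cluster by a patch of smaller clusters, bottoming out in individual Spectres. Existence of a tiling is then the easy half. Starting from a single cluster $c$ that recurs inside its own image, I would iterate $\omega$ to obtain patches $\omega^n(c)$ arranged so that each $\omega^n(c)$ lies in the interior of $\omega^{n+1}(c)$; these patches exhaust a growing region about a fixed point, and passing to the limit yields a tiling of the whole plane. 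Because every cluster in the alphabet is built only from unreflected Spectres, this limit tiling is chiral, establishing the ``admits a tiling'' clause.

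The substance of the theorem is the \emph{uniqueness} of the hierarchy, that is, \emph{recognizability}: in an arbitrary tiling by Spectres the tiles must group into level-one clusters in one and only one way, the resulting clusters must themselves group into level-two clusters in one and only one way, and so on. I would establish the base case by a finite analysis of local configurations. Using the facts already recorded---that every tiling by $\mathrm{Tile}(1,1)$ is vertex-to-vertex, that every maximal boundary segment has length $1$ or $2$, and that the interior angles strictly alternate between multiples of $90^\circ$ and of $120^\circ$---together with the chirality constraint (in a chiral tiling no reflected copies occur), one can enumerate the ways a Spectre may be surrounded. The goal is to show this local vocabulary is so restricted that each tile lies in a uniquely determined cluster whose type and orientation are pinned down by its neighbourhood. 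Here I would lean heavily on the corresponding combinatorial analysis for the hat continuum in \cite{hat}, since the chiral $\mathrm{Tile}(1,1)$ tilings form a sub-family of those already studied there, with the periodic possibilities excluded precisely by forbidding reflections.

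With the base case in hand, I would encode the matching data between adjacent clusters as decorations on the cluster edges (the ``$\pm$'' edge labels carried by the hexagonal clusters in our figures) and prove that these decorations force the clusters to assemble exactly as $\omega$ prescribes. An induction then propagates recognizability up every level, yielding the unique infinite nesting asserted in the theorem. Non-periodicity follows by the standard argument: any symmetry of a tiling permutes its canonically determined level-$n$ supertiles, so a nonzero translation $v$ fixing the tiling would preserve the level-$n$ partition for all $n$; choosing $n$ so large that the supertiles have inradius exceeding $|v|$ forces $v$ to carry a supertile onto an overlapping supertile of the same type, and descending the uniquely determined hierarchy shows $v$ must act trivially on the finer levels as well, so $v=0$. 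Finally, since every tiling admitted by the Spectre is chiral by construction, these non-periodic tilings are exactly its chiral tilings, and the Spectre is a strictly chiral aperiodic monotile. I expect the main obstacle to be precisely the recognizability step---the finite but delicate case analysis showing that the grouping into clusters is both forced and unique at the bottom level, from which the induction, non-periodicity, and strict chirality all follow.
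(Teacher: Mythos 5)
Your overall architecture---compose tiles into a finite alphabet of edge-decorated clusters, prove recognizability at the bottom level by finite case analysis, propagate it up by induction, and extract non-periodicity from the unique hierarchy---is exactly the paper's, so the skeleton is sound. But two concrete points deserve attention. First, your plan to enumerate local configurations of $\mathrm{Tile}(1,1)$ directly, ``leaning on'' the analysis in~\cite{hat}, misses the enabling device the paper actually uses: $\mathrm{Tile}(1,1)$ is \emph{not} a polyform, so the discrete grid-based enumeration from~\cite{hat} does not apply to it as-is. The paper first proves (Theorem~\ref{thm:hatturtle}) that chiral $\mathrm{Tile}(1,1)$ tilings are combinatorially equivalent to chiral tilings by hats \emph{and turtles} (by deforming the ``even'' or ``odd'' edge lengths from $1$ to $\sqrt{3}$), which places everything back on the $[3.4.6.4]$ Laves grid and makes the reduced-patch computations tractable. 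Without that reduction, or some substitute for it, your ``finite but delicate case analysis'' has no effective computational substrate.

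Second, your existence argument has a genuine gap. You posit a seed cluster $c$ recurring inside its own image so that $\omega^n(c)$ nests inside $\omega^{n+1}(c)$ about a fixed point. But the substitution here \emph{reverses handedness at every level} (each supertile is combinatorially equivalent to a \emph{reflected} marked hexagon), so $c$ does not recur inside $\omega(c)$ with the same orientation; you would at minimum need to work with $\omega^2$. More importantly, the supertiles are only \emph{combinatorially} equivalent to the base tiles---they are not obtained by rigid motions and uniform scaling (the limit-shape behaviour noted in Appendix~\ref{spectresubstAppendix} makes this explicit)---so ``patches exhausting a growing region about a fixed point'' is not literally available. The paper instead iterates the substitution purely combinatorially to get unbounded combinatorial patches, and then invokes a geometric realization step (the argument of Lemma~\ref{lemma:CombinatorialTilingLemma}, via~\cite{regprod}) to convert an unbounded combinatorial patch into an actual tiling of the plane. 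Your non-periodicity argument via the unique hierarchy is fine, modulo the same caveat that translations must be tracked through combinatorial equivalence rather than acting directly on geometrically self-similar supertiles.
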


\begin{figure}[t]
\begin{center}
	\includegraphics[width=\textwidth]{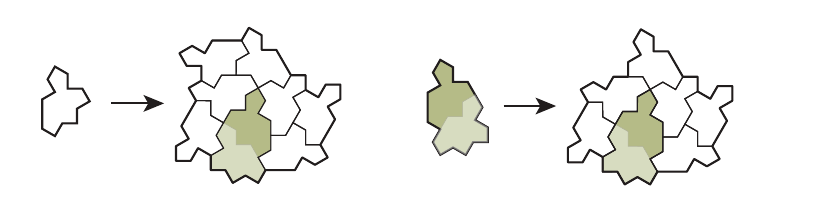}
\end{center}
\caption{\label{fig:thespectretiles}
	Two substitution rules that can be iterated to tile the plane
 	with Spectres.  The rules are based on a single Spectre and a 
	two-Spectre compound called a Mystic.  The first rule~(left)
	replaces the Spectre by a cluster containing a Mystic and seven
	Spectres, all reflected;
	the second~(right) replaces a Mystic by a cluster containing
	a Mystic and six Spectres.  In Section~\ref{sec:hexagons} we show that 
	every tiling by Spectres can be composed into non-overlapping
	congruent copies of these clusters.}
\end{figure}

\begin{figure}[ht!]
\centerline{
\includegraphics[width=.9\textwidth]{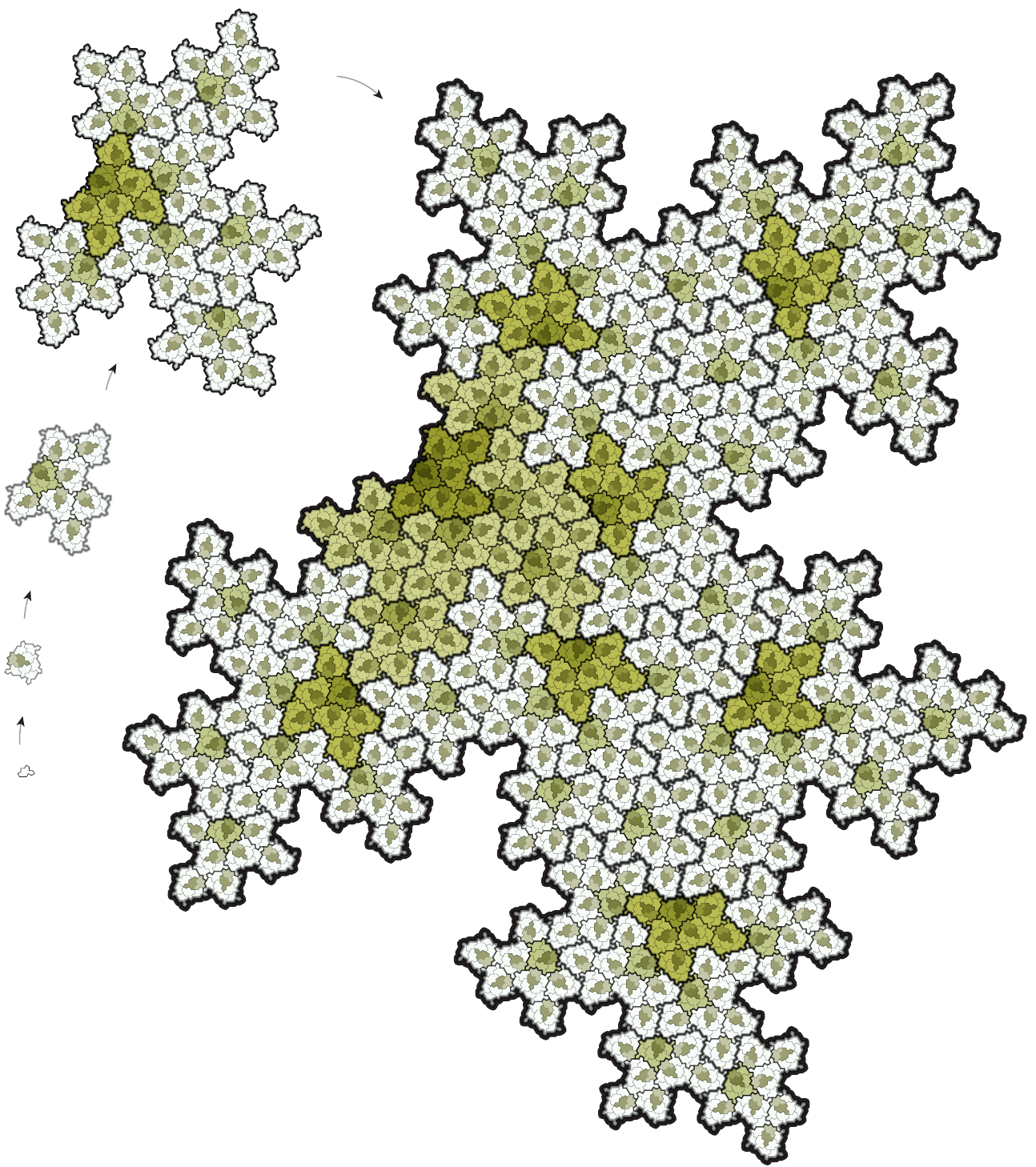}}
\caption{\label{fig:superdupertiles} Five levels of supertiles created
by applying the substitution rules given in Figure~\ref{fig:thespectretiles},
starting with a single Spectre.  The Mystic and its supertiles
are shaded in progressive levels of green.  
Each step uses Spectres of the opposite handedness of the one before it.}
\end{figure}

The first step in proving this theorem is to show that in any tiling
by Spectres, it is possible to partition the tiles into congruent
copies of the clusters of eight and nine Spectres shown in
Figure~\ref{fig:thespectretiles}.
In Section~\ref{sec:hexagons} we give a computer-assisted proof of
this fact.  We further show that these clusters belong to
nine distinct categories based on the relative positions of
neighbouring clusters (Figure~\ref{fig:clusters}), and that each
category may be viewed combinatorially as a regular hexagon marked
with matching rules in the form of labelled
edges (Figure~\ref{fig:clustershex}).  The proof relies on exhaustive
enumeration of patches of Spectres that can occur in tilings.  At
the outset, such an enumeration appears complicated: unlike hats,
Spectres are not polyforms, which suggests that generation of patches
cannot be reduced to discrete computations on an underlying grid.
But as we show in Section~\ref{sec:hatturtle}, any 
tiling by Spectres is combinatorially equivalent to a tiling by
hats and turtles~\cite{hat}, allowing us to perform the computations
we need with minor modifications to battle-tested software.

In Section~\ref{sec:hexsubst} we prove that in any tiling by the
nine marked hexagons described above, the hexagons can be uniquely
composed into supertiles that are combinatorially equivalent to
reflections of those hexagons.  The marked hexagons are therefore
hierarchical.  
We then observe that we can ``reverse'' the
supertile composition to obtain well-defined substitution rules on
the marked hexagons.  These rules can be iterated to produce a patch
of hexagons of any size.  It follows that the hexagons tile the
plane, and from any such tiling we may construct a
combinatorially equivalent tiling of the plane by Spectres, meaning
that the Spectre tiles the plane as well.  Because all such tilings
were previously shown to be non-periodic, the Spectre is a strictly
chiral aperiodic monotile.

The marked hexagons of Figure~\ref{fig:clustershex}, and the
substitution rules defined for them in Section~\ref{sec:hexsubst},
suffice to establish the aperiodicity of the Spectre.  However, it
is also possible to begin this process one step earlier, and define
substitution rules at the level of Spectres themselves.  These rules
are illustrated in Figure~\ref{fig:thespectretiles}.  Here we operate
on two base tiles: the first is a single Spectre, and the second is
a symmetric two-Spectre compound that we call a ``Mystic'' because
of its resemblance to a Buddha in seated meditation.  The first
substitution rule replaces a single Spectre with a ``Spectre cluster''
containing a Mystic and seven Spectres; the second replaces a Mystic
by a ``Mystic cluster'' containing a Mystic and six Spectres.
At the end of 
Section~\ref{sec:hexsubst} we transcribe the combinatorics of our
marked hexagons down onto the Spectre and Mystic, showing that each
tile in a tiling by Spectres lies in a unique infinite hierarchy of
supertiles.

Based on the substitution rules in Figure~\ref{fig:thespectretiles},
and the fact that the higher-order rules on hexagons are combinatorially
equivalent, we can use an eigenvalue computation to deduce properties
of Spectre tilings.  We find that in the limit, areas increase by
a factor of~$4+\sqrt{15}$ with each stage of the substitution, and
the ratio of the number of ``even'' Spectres (white and light green
in the figure) to the number of ``odd'' Spectres (dark green) is
also~$4+\sqrt{15}$.  Even and odd tiles are discussed in greater
detail in Section~\ref{sec:hatturtle}.  The irrationality of this
number immediately implies that any tiling produced using the
substitution rules must be non-periodic.  Nevertheless, we need the
full proof in Sections~\ref{sec:hexagons} and~\ref{sec:hexsubst}
to be certain that the substitution rules account for all possible
Spectre tilings.

At multiple steps in our proof, we transform a tiling into another
tiling by different tiles, and use properties of the new tiling to 
reason about the old one.  These transformations are deterministic and unique.
It is also important that they ``preserve periodicity'', that is, that
the original tiling is periodic if and only if the transformed tiling is.
In some situations, such as when grouping tiles into supertiles, this 
preservation is immediate: the two tilings have the same symmetry group.
When working through a less rigid combinatorial equivalence of tilings,
we require that when pairs of vertices correspond, then either both
pairs are related by translational symmetries of their tilings, or neither
is (Section~\ref{sec:combeq}).  This preservation of translational
symmetries allows us to conclude that every homochiral tiling by
$\mathrm{Tile}(1,1)$, and every tiling by Spectres must
be non-periodic.  

Remarkably, the rules of Figure~\ref{fig:thespectretiles} reverse
all tile orientations.  Consequently, in any sequence of iterations
of these rules, we will necessarily produce homochiral patches of
alternating handedness, as illustrated in Figure~\ref{fig:superdupertiles}.
We have never previously encountered this phenomenon when working with
substitution tilings.
Of course, the alternation can be avoided by defining
larger rules that combine two rounds of substitution.

The proofs used in this work are highly combinatorial in nature,
which sidesteps the practical question of how to draw patches of
Spectres algorithmically.  In Appendix~\ref{spectresubstAppendix}
we include a simple algorithm for laying out Spectres, derived from
the substitution rules in Figure~\ref{fig:thespectretiles}.  The algorithm
can be used to draw large patches like the ones in 
Figure~\ref{fig:superdupertiles}.  We have created an interactive
browser-based visualization tool for drawing patches of Spectres, 
available at~\href{https://cs.uwaterloo.ca/~csk/spectre/}{\nolinkurl{cs.uwaterloo.ca/~csk/spectre/}}.

\subsection{Combinatorial equivalence of tilings}
\label{sec:combeq}

At many points in this paper, we deduce information about a tiling
by noting its combinatorial equivalence to some other tiling.  Before
proceeding we consider the notion of combinatorial equivalence in
detail.  Two patches, or two tilings,  are \emph{combinatorially
equivalent} if and only if they are  homeomorphic as topological
complexes. Two sets of tiles are \emph{combinatorially equivalent}
if each tiling admitted by one is combinatorially equivalent to a
tiling admitted by the other. 

The following lemma gives a test for establishing combinatorial
equivalence.  An \emph{edge patch} is a collection of tiles with
disjoint interiors, such that there exists a closed arc~$e$ that
is a connected component of the intersection of two of the tiles,
all the other tiles contain an endpoint of~$e$, and~$e$ lies in the
interior of the union of the tiles.
A \emph{vertex} of an edge
patch is a point in the interior of the union of the tiles that 
is shared by at least three tiles of the patch.

\begin{lemma}
\label{lemma:CombinatorialTilingLemma}
Let $\mathcal A$ and $\mathcal B$ be finite sets of tiles (possibly
equipped with matching rules) such that each edge patch of one is
combinatorially equivalent to some edge patch admitted by the other.
Suppose further that the combinatorial equivalence satisfies the local
consistency condition that if two edge patches from one set both have
vertices surrounded by the same arrangement of tiles (up to an
isometry mapping one vertex onto the other), then the corresponding
vertices in the corresponding edge patches from the other set are also
surrounded by the same arrangement of tiles.  
Then $\mathcal A$ and $\mathcal B$ are combinatorially equivalent.
\end{lemma}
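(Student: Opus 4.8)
The plan is to prove the two directions of combinatorial equivalence separately; since the hypotheses are symmetric in $\mathcal{A}$ and $\mathcal{B}$, it suffices to show that every tiling admitted by $\mathcal{A}$ is combinatorially equivalent to some tiling admitted by $\mathcal{B}$. Fix such a tiling $\mathcal{T}$ of the plane by $\mathcal{A}$. I would first regard $\mathcal{T}$ as a cell complex: its tiles are the $2$-cells, the maximal arcs shared by exactly two tiles are the $1$-cells, and its vertices (points where three or more tiles meet) are the $0$-cells. Combinatorial equivalence of two tilings amounts to an isomorphism of these complexes respecting the cyclic orders of cells around each vertex and each face, so the goal becomes to realize the abstract complex of $\mathcal{T}$ by honest tiles of $\mathcal{B}$.

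The next step is to cover $\mathcal{T}$ by its edge patches. For each $1$-cell $e$, the collection of all tiles meeting $e$ forms an edge patch $P_e$, and these edge patches collectively contain every tile (each tile has at least one edge) and hence cover the plane. By hypothesis each $P_e$ is combinatorially equivalent to some edge patch $\Phi(P_e)$ admitted by $\mathcal{B}$; I would fix such a choice, obtaining for every edge a local combinatorial model built from $\mathcal{B}$-tiles. The task is then to glue these local models into a single complex.

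The gluing data lives at the vertices. Two edge patches $P_e$ and $P_{e'}$ overlap whenever $e$ and $e'$ share a vertex $v$, in which case both patches see the same cyclic arrangement of $\mathcal{A}$-tiles around $v$. The local consistency condition is exactly what guarantees that $\Phi$ sends this shared arrangement to a well-defined arrangement of $\mathcal{B}$-tiles, independent of whether we view $v$ through $P_e$ or through $P_{e'}$. Consequently the locally defined $\mathcal{B}$-models agree on their overlaps, and I would assemble them into one combinatorial complex $\mathcal{C}$: its $0$-cells are the vertices of $\mathcal{T}$ decorated by the transferred $\mathcal{B}$-arrangements, its $1$-cells are inherited from the edge patches, and its $2$-cells are the tiles, each now carrying the combinatorial type of the corresponding $\mathcal{B}$-tile. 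Because $\mathcal{T}$ tiles the plane and $\mathcal{C}$ is locally isomorphic to it everywhere, $\mathcal{C}$ is the complex of a tiling of the whole plane; realizing each abstract cell by its $\mathcal{B}$-tile, and transporting any matching rules along with the edge patches, produces the desired tiling $\mathcal{T}_B$ by $\mathcal{B}$, combinatorially equivalent to $\mathcal{T}$.

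I expect the main obstacle to be the gluing step: showing that the per-edge choices $\Phi(P_e)$ reconcile into a globally consistent complex rather than merely agreeing pairwise. The delicate point is that edge patches can overlap in two-dimensional regions, sharing a tile even when their defining edges do not share a vertex, so one must verify that agreement of vertex-arrangements propagates to agreement on \emph{all} shared cells. Finiteness of $\mathcal{A}$ and $\mathcal{B}$ helps here, since only finitely many local configurations arise and the correspondence on them is fixed once and for all; one then uses connectivity of the vertex-edge graph of $\mathcal{T}$ to propagate local agreement across the plane. Once well-definedness is established, checking that $\mathcal{C}$ covers the plane with disjoint interiors and carries the correct cyclic orders is routine, and the reverse direction follows by the symmetric argument.
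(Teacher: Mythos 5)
Your setup matches the paper's proof: both arguments regard a tiling by $\mathcal A$ as an abstract complex and use the hypothesized correspondence of edge patches, together with the local consistency condition at vertices, to transfer the local structure of $\mathcal B$ onto that complex. The gap is in your final step, which you dismiss as ``routine'': the claim that the assembled complex $\mathcal C$, once each abstract cell is realized by an actual $\mathcal B$-tile, ``is the complex of a tiling of the whole plane.'' This does not follow from $\mathcal C$ being abstractly isomorphic to the complex of $\mathcal T$. The tiles of $\mathcal B$ have different geometry from those of $\mathcal A$, so gluing honest $\mathcal B$-tiles according to the combinatorics of $\mathcal T$ produces, a priori, only some flat surface; nothing you have said rules out that the development of this surface into the plane fails to be a global homeomorphism. (Compare: gluing squares three around each vertex yields a cone, not the plane; gluing them combinatorially like a planar complex but with the wrong shapes can fail to close up.) Asserting that the result ``covers the plane with disjoint interiors'' is precisely the conclusion of the lemma, not a checkable afterthought.

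The paper closes this gap with a genuine piece of mathematics that your proposal omits entirely: the surface obtained by charting the local geometry of the $\mathcal B$-edge patches onto the abstract complex is (i) flat, because every vertex link is an actual edge patch of $\mathcal B$ realized in the plane, so angle sums are $2\pi$ and there are no cone points; (ii) simply connected, because the complex is that of a tiling of the plane; and (iii) complete, using finiteness of the tile sets. By the classification of simply-connected complete flat Riemannian $2$-manifolds (Killing--Hopf), this surface \emph{is} the Euclidean plane, and therefore the construction really does yield a planar tiling by $\mathcal B$. Your proof needs this argument, or an equivalent developing-map argument establishing that the development is a bijection onto $\mathbb{R}^2$; without it the assembly of local models into a global planar tiling is unjustified. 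Your concern about edge patches overlapping in two-dimensional regions is legitimate but secondary; the missing idea is the geometric realization step.
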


\begin{proof} Any tiling admitted by $\mathcal A$ may be considered  as an abstract  complex with labelled vertices, edges, faces and incidences between them, but no further structure. Upon this complex we 
may chart a global geometry,  applying the local geometry provided by the  edge patches admitted by~$\mathcal B$.
 From this structure we obtain a geometric complex
 with the same combinatorics as the
tiling we started with. This complex is a 
simply-connected, complete,  Riemannian metric manifold of constant zero
curvature, and is therefore the Euclidean plane~\cite{hopf}.  Thus
we have described this complex as a tiling by tiles in $\mathcal B$ that is combinatorially equivalent to our initial tiling by tiles in $\mathcal A$.
 \end{proof}

Note that combinatorial equivalence preserves periodicity.
If $\mathcal{T}_\mathcal{A}$ and $\mathcal{T}_\mathcal{B}$ are 
combinatorially equivalent tilings
by $\mathcal{A}$ and $\mathcal{B}$, then 
$\mathcal{T}_\mathcal{A}$ is periodic if and only if
$\mathcal{T}_\mathcal{B}$ is.
If $\mathcal{T}_\mathcal{A}$ has as a symmetry a translation by
a vector~$\mathbf{v}$, and if two vertices $x_1$,~$x_2$ separated
by~$\mathbf{v}$ map to vertices of $\mathcal{T}_\mathcal{B}$ separated
by a vector~$\mathbf{w}$, then any other two vertices $y_1$,~$y_2$ of
$\mathcal{T}_\mathcal{A}$ separated by~$\mathbf{v}$ also map to
vertices separated by~$\mathbf{w}$, by considering corresponding
sequences of edge patches forming paths from $x_1$ to~$y_1$ and $x_2$
to~$y_2$.  Thus $\mathbf{w}$ defines a translational symmetry
of $\mathcal{T}_\mathcal{B}$.

\section{From Spectres to hats and turtles}
\label{sec:hatturtle}

Our computational analysis of the
hat~\cite{hat} was simplified by the fact that it is a polyform, specifically a union
of eight kites from the Laves tiling $[3.4.6.4]$.  Furthermore, the 
tiles in every tiling by hats must be aligned with the kites of the
Laves tiling~\cite[Lemma A.6]{hat}.  Consequently, patches of hats
can be manipulated discretely by associating information with the 
cells of the underlying kite grid.  The aperiodic $10$-kite known as the
turtle is also compatible with grid-based computations.

$\mathrm{Tile}(1,1)$ is not a polyform, which at the outset appears
to rule out such an approach.  However, we can regain the ability to 
perform discrete computations by exploiting a connection between 
tilings by $\mathrm{Tile}(1,1)$ and tilings by combinations of
hats and turtles.  We prove the following result.

\pagebreak 
\begin{theorem}
\label{thm:hatturtle}
There is a bijection between combinatorially equivalent tilings by
$\mathrm{Tile}(1,1)$ and by the set $\{\mbox{hat},\mbox{turtle}\}$,
such that a $\mathrm{Tile}(1,1)$ tiling has a translation as a symmetry
if and only if the corresponding hat-turtle tiling has a corresponding
translation, and the $\mathrm{Tile}(1,1)$ tiling includes a reflected 
tile if and only if the hat-turtle tiling does.
\end{theorem}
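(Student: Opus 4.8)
The plan is to apply Lemma~\ref{lemma:CombinatorialTilingLemma} with $\mathcal A = \{\mathrm{Tile}(1,1)\}$ and $\mathcal B = \{\mathrm{hat},\mathrm{turtle}\}$, by exhibiting an explicit, locally determined rule that rewrites each $\mathrm{Tile}(1,1)$ in a tiling as a hat or a turtle so that neighbouring tiles continue to meet along matching edges. The starting point is that every member $\mathrm{Tile}(a,b)$ of the continuum shares the same vertex angles---strictly alternating multiples of $90^\circ$ and $120^\circ$---and the same cyclic pattern of edges, differing only in the two edge lengths $a$ and $b$; the hat realises the two edge classes as lengths $(1,\sqrt{3})$ and the turtle as $(\sqrt{3},1)$. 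For $\mathrm{Tile}(1,1)$ these classes are geometrically indistinguishable, so a tiling by $\mathrm{Tile}(1,1)$ is, combinatorially, a vertex-to-vertex tiling by the common $14$-gon---exactly the edge-to-edge, unit-segment structure already used above in constructing a Spectre.

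First I would fix the rewriting rule. Since the two edge classes alternate around the boundary and the vertex colours ($90^\circ$ versus $120^\circ$) must agree wherever corners of neighbouring tiles meet, each shared unit edge of a $\mathrm{Tile}(1,1)$-tiling carries a well-defined pair of class labels, one from each incident tile. For the two sides to receive a common length once edges are lengthened, these labels impose a binary constraint between neighbours: tiles sharing like-labelled edges must be given the same type (both hats or both turtles), and tiles sharing unlike-labelled edges must be given opposite types. I would then show that this family of constraints is globally solvable, so that a consistent hat/turtle assignment exists and is forced by the local data; collapsing the hat and turtle edges back to unit length inverts the assignment, yielding a bijection $\Phi$ between the two families of tilings. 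Each edge patch of $\mathrm{Tile}(1,1)$ maps under $\Phi$ to a combinatorially equivalent edge patch of hats and turtles and conversely, and because the type of each tile is read off from its neighbourhood the local-consistency hypothesis of Lemma~\ref{lemma:CombinatorialTilingLemma} holds; the lemma then upgrades $\Phi$ to a combinatorial equivalence of the full tilings.

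The two refinements follow from the structure of $\Phi$. For translations I would invoke the remark following Lemma~\ref{lemma:CombinatorialTilingLemma}: a translational symmetry by $\mathbf{v}$ of a $\mathrm{Tile}(1,1)$-tiling sends vertices separated by $\mathbf{v}$ to vertices of the image separated by a fixed vector $\mathbf{w}$, which is therefore a translational symmetry of the hat-turtle tiling, and symmetrically in the reverse direction. For reflections, the rewriting rule acts tile by tile without changing a tile's orientation, so an unreflected $\mathrm{Tile}(1,1)$ becomes an unreflected hat or turtle and a reflected one becomes a reflected hat or turtle; hence a reflected tile occurs in one tiling exactly when one occurs in its partner.

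The hard part will be the global solvability invoked in the second paragraph: proving that the per-edge ``same type / opposite type'' constraints are everywhere consistent, that is, that they carry no obstruction around the tiling. Because the plane is simply connected this reduces to a parity ($\mathbb{Z}/2$) check around each tiling vertex, which in turn rests on classifying the admissible vertex figures of $\mathrm{Tile}(1,1)$ from the angle data and confirming that each one closes up with the correct parity. A secondary subtlety is bookkeeping the collinear $180^\circ$ vertex---which splits the long side into two unit edges---consistently on both sides, and verifying that the edge-patch correspondence is genuinely orientation-faithful, so that the reflection clause is exact rather than merely up to a global mirror.
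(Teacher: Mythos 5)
Your setup is right---the ``same type / opposite type'' constraint between neighbours is exactly the correct combinatorial content---but the step you yourself flag as the hard part is the crux of the theorem, and you leave it unexecuted. The paper dissolves that difficulty with a single observation you are missing: in any tiling by $\mathrm{Tile}(1,1)$ every edge vector makes an angle of $30k^\circ$ with a fixed reference direction, so each edge of the \emph{tiling} (not just of a tile) carries a canonical parity, the parity of~$k$. Because the interior angles of the tile strictly alternate between multiples of $90^\circ$ and of $120^\circ$, the two edge classes of any single tile fall into opposite direction parities, so labelling each tile by the parity of its pair of collinear edges is an explicit global solution of your constraint system. No holonomy or vertex-figure parity check is needed; the $\mathbb{Z}/2$ potential you are looking for is just ``direction parity'', and it is defined pointwise, not by propagation. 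Lengthening all edges of one parity from $1$ to $\sqrt{3}$ then turns every even tile into a turtle and every odd tile into a hat (or vice versa for the other parity), which also shows the correspondence is really one-to-two until a convention is fixed.

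Two secondary gaps. First, you route the geometric realizability through Lemma~\ref{lemma:CombinatorialTilingLemma}, but verifying its hypotheses would require enumerating the edge patches of both tile sets and checking the local consistency condition, which you do not do; moreover your map $\Phi$ is defined on tilings, so asserting that it carries edge patches to edge patches is not yet meaningful. The paper instead modifies the tiling geometrically in place: around any tile the edge vectors in each of the six directions sum to zero, so the lengths in each direction family can be changed independently without breaking adjacencies, provided no boundary self-intersects (\cite[Theorem 6.1]{hat}, \cite[Lemma~1.1]{regprod}); Lemma~\ref{lemma:CombinatorialTilingLemma} is only invoked, via the remark following it, for the translation clause. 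Second, your inverse map (``collapsing the hat and turtle edges back to unit length'') silently assumes that an arbitrary hat--turtle tiling is edge-to-edge on the $14$-gon boundaries; this needs the fact that any such tiling is aligned to the underlying $[3.4.6.4]$ Laves tiling, established by the argument of \cite[Lemma A.6]{hat}. Your handling of the translation and reflection clauses is fine and matches the paper's.
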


\begin{figure}[t]
\centerline{\includegraphics[width = \textwidth]{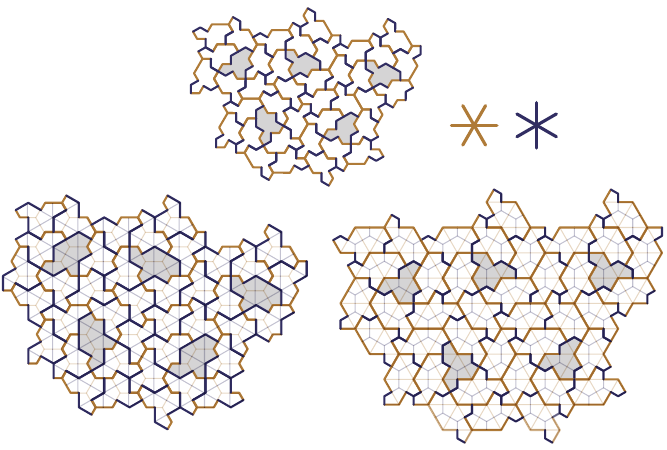}}
\caption{Tilings by $\mathrm{Tile}(1,1)$ are combinatorially equivalent 
to tilings by hats and turtles, through a change to the lengths
of the ``even'' and ``odd'' tile edges, oriented at even and odd multiples
of $30^\circ$ to the horizontal (top right). All three patches are
drawn so that short edges have unit length.}\label{figure:hatturtlespectre} 
\end{figure}

\begin{proof}
Note first that in any tiling by a combination of hats and turtles, the
tiles must be aligned to an underlying $[3.4.6.4]$ tiling.  This fact
can be established using an argument similar to the one used by Smith
et~al.\ for monohedral tilings by hats alone~\cite[Lemma A.6]{hat}.

Recall that the hat and the turtle are members of a continuum of tiles
parameterized by two non-negative edge lengths~$a$ and~$b$.  Each such
tile is denoted $\mathrm{Tile}(a,b)$; the hat is $\mathrm{Tile}(1,\sqrt{3})$
and the turtle is $\mathrm{Tile}(\sqrt{3},1)$.  

Let $\mathcal{T}$ be a tiling by $\mathrm{Tile}(1,1)$.  The edges
of the tiles of $\mathcal{T}$ lie in six families of parallel lines,
spaced evenly by multiples of $30^\circ$.  For convenience, we
rotate the tiling so that one of those families is horizontal.  In
that case, the edge vectors of tiles all form angles of $30k^\circ$
with the horizontal; we refer to edges as ``even'' or ``odd''
when $k$ is even or odd, respectively.  Furthermore, every tile in
$\mathcal{T}$ must appear in one of 24 orientations, twelve unreflected
and twelve reflected; arbitrarily choosing the tile's two collinear
edges as a reference, we label each tile as ``even'' or ``odd''
according to the parity of those edges. 
Figure~\ref{figure:hatturtlespectre} (top) shows a sample patch by
$\mathrm{Tile}(1,1)$, with the odd tiles shaded. Next to the patch 
are two stars showing the
even and odd edge directions.

Smith et~al.\ show that the two edge lengths in a tiling by
$\mathrm{Tile}(a,b)$ can be modified continuously to produce a
family of combinatorially equivalent monotiles~\cite[Theorem~6.1]{hat}.  Analogous modifications are possible here.  Around any
tile in $\mathcal{T}$, the edge vectors belonging
to any one direction add up to zero.  We can therefore change the
lengths of all edges in each of those six directions independently
of the others, provided that no tile boundary becomes
self-intersecting~\cite[Lemma~1.1]{regprod}.  In particular, if we
change the lengths of the even edges of~$\mathrm{Tile}(1,1)$
to~$\sqrt{3}$, then even tiles
become turtles and odd tiles become hats, all aligned to a 
single~$[3.4.6.4]$ Laves tiling (Figure~\ref{figure:hatturtlespectre}, bottom
right).  If instead we change the lengths of 
the odd edges to~$\sqrt{3}$, then even tiles become hats and odd tiles
become turtles (Figure~\ref{figure:hatturtlespectre}, bottom
left).
Similarly, if we begin with any tiling by hats and turtles and 
change all edge lengths of~$\sqrt{3}$ to~$1$, we obtain a tiling by
$\mathrm{Tile}(1,1)$.  We have created an animation that visualizes
these continuous edge length modifications on a sample patch of 
tiles---see \href{https://youtu.be/K6wXQvL5KRo}{\nolinkurl{youtu.be/K6wXQvL5KRo}}.

Thus there is a direct relationship between a tiling by
$\mathrm{Tile}(1,1)$ and a combinatorially equivalent tiling by
hats and turtles.  Orientations of tiles in the two tilings correspond
in the obvious way.  Unreflected and reflected copies of $\mathrm{Tile}(1,1)$
correspond to unreflected and reflected hats and turtles.
As in Lemma~\ref{lemma:CombinatorialTilingLemma} the
effect of this correspondence on a translation symmetry is given
by applying the correspondence to two vertices of one tiling that
are related by that translation.  
\end{proof}

The next section relies on the enumeration of patches of tiles that can
occur in homochiral tilings by $\mathrm{Tile}(1,1)$.  While we could in 
principle perform such calculations directly on a boundary representation
of the tile, we instead use the correspondence established here, 
constructing (and illustrating) patches of unreflected hats and turtles
as a proxy
for even and odd copies of~$\mathrm{Tile}(1,1)$.  By doing so, we can
harness convenient discrete algorithms similar to the ones we used 
previously to study the hat~\cite{Myers, Kaplan}, knowing that any
results we obtain apply to~$\mathrm{Tile}(1,1)$ as well.

\FloatBarrier


\section{From hats and turtles to marked hexagons}
\label{sec:hexagons}

We have not yet shown that the Spectre admits any tilings of the plane.
However, we can still prove that any such tilings, if they exist, must
be non-periodic.  This section and the one that follows furnish such
a proof.  As a by-product we also obtain a substitution system that 
can produce patches of Spectres of any size.

Any tiling by Spectres is combinatorially equivalent to a homochiral
tiling by $\mathrm{Tile}(1,1)$.  In turn, that tiling is equivalent
to a homochiral tiling by hats and turtles (i.e., all tiles are unreflected,
or all tiles are reflected).  These equivalences extend to any
translational symmetries of the tilings, meaning that the Spectre tiling
is periodic if and only if the hat-turtle tiling is.  As with the analysis
of hat tilings~\cite[Section~4]{hat}, we show here that in any homochiral 
tiling by hats and turtles, we can group tiles into non-overlapping 
clusters.  The resulting tiling by the clusters satisfies certain 
matching rules and has the same symmetries as the original tiling
by hats and turtles. 
Recalling that  two tilings are combinatorially equivalent if and only 
if they are homeomorphic as topological complexes, we arrive at the 
following result.

\begin{theorem}
\label{thm:clusters89}
Any tiling by Spectres is equivalent via Theorem~\ref{thm:hatturtle}
to a homochiral tiling by hats and turtles.  That tiling
can be composed into the clusters
shown in Figure~\ref{fig:clusters}, satisfying the matching rules
included there.  The tiling by those clusters has the same 
symmetries as the tiling by hats and turtles, and is 
combinatorially equivalent to a tiling by regular hexagons.
\end{theorem}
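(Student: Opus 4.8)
The first assertion merely collects facts already in hand: by the definition of a Spectre and Theorem~\ref{thm:hatturtle}, any tiling by Spectres is combinatorially equivalent to a chiral tiling by $\mathrm{Tile}(1,1)$, and hence to a chiral tiling by hats and turtles. Since a chiral tiling uses tiles of a single handedness, after a global reflection if necessary I may assume that all tiles are unreflected, so that the entire configuration is aligned to a single $[3.4.6.4]$ Laves tiling, as established at the start of the proof of Theorem~\ref{thm:hatturtle}. This is the crucial gain: every hat and turtle now occupies a fixed set of kites of the grid, and the whole analysis reduces to a discrete combinatorial problem on that lattice, exactly as in the hat paper~\cite[Section~4]{hat}.

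The substantive content is the composition into clusters, which I would establish by exhaustive enumeration of local patches. The plan is to fix an arbitrary tile and determine, by extending the patch kite-by-kite on the Laves grid, every legal way its surroundings can be completed. Because the tiles are polyforms on a fixed grid and only finitely many orientations occur, the set of \emph{coronas} (a tile together with the ring of tiles meeting it) that can appear in any tiling is finite and can be generated mechanically. The key claim to verify is that these local constraints are rigid enough to \emph{force} a canonical grouping: each hat or turtle is assigned unambiguously to one of the clusters of eight and nine tiles shown in Figure~\ref{fig:clusters}, and this assignment is consistent across shared boundaries, so the clusters partition the tiling with no overlaps or gaps. Concretely, one shows that a small forced neighbourhood around each tile already pins down which cluster contains it; since the determination is purely local and the patches are finite, a finite computation over all admissible coronas suffices to check every case. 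The same enumeration records which cluster may meet which along each edge, and these adjacency restrictions are precisely the matching conditions attached to Figure~\ref{fig:clusters}.

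Granting the clustering, the claim about symmetries follows formally from its \emph{uniqueness}. Any isometry $g$ that is a symmetry of the hat-turtle tiling permutes its tiles; because the cluster containing a tile is determined by that tile's local surroundings alone, $g$ must carry each cluster to a cluster, and hence $g$ is a symmetry of the tiling by clusters. Conversely, every symmetry of the cluster tiling visibly permutes the underlying hats and turtles, so the two symmetry groups coincide. In particular, by the periodicity-preservation remark following Lemma~\ref{lemma:CombinatorialTilingLemma}, the cluster tiling is periodic exactly when the original Spectre tiling is.

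Finally, to obtain combinatorial equivalence with a tiling by regular hexagons I would appeal to Lemma~\ref{lemma:CombinatorialTilingLemma}. It remains to check that each cluster is a topological disk meeting exactly six neighbouring clusters in arcs, arranged cyclically, so that every cluster has the incidence pattern of a hexagon in the $\{6,3\}$ tiling; this too can be read off from the finite corona enumeration by listing the neighbours of each cluster type. The labelled edges recording the matching conditions then become the edge labels of the marked hexagons of Figure~\ref{fig:clustershex}, and Lemma~\ref{lemma:CombinatorialTilingLemma} upgrades the local correspondence between clusters and marked hexagons to a global combinatorial equivalence of tilings. The main obstacle throughout is the completeness and correctness of the enumeration: one must be certain that every patch that can legally arise on the grid has been generated, and that the forced clustering genuinely covers all of them, which is exactly why the argument is carried out by computer.
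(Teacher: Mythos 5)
Your proposal matches the paper's proof in both structure and substance: a computer-assisted enumeration of local patches of hats and turtles that forces a unique clustering, with symmetry preservation following from the absence of arbitrary choices and the hexagon equivalence following from the six-neighbour incidence structure. The paper merely organizes the computation in explicit stages (first the dichotomy between sparse turtles among hats and sparse hats among turtles, then the successive accretion of the clusters T6H, T7H and T8H via reduced lists of $1$-patches, and finally the classification of T7H/T8H coronas into the nine marked types of Figure~\ref{fig:clusters}), which is an implementation of the same forced-local-clustering idea you describe.
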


\begin{figure}[ht!]
\captionsetup{margin=0pt}%
\begin{center}
\subfloat[Cluster $\Gamma$]{%
\begin{tikzpicture}[x=1.95mm,y=1.95mm]
  \hexeightB{};
  \markpt{-3}{4};
  \markpt{-2}{-4};
  \markpt{0}{6};
  \markpt{5}{3};
  \markpt{6}{-11};
  \markpt{13}{-7};
  \vctxt{-3.5}{0}{$\alpha^-$};
  \vctxt{0.5}{-8}{$\alpha^+$};
  \vctxt{-2}{6}{$\beta^-$};
  \vctxt{3.5}{5.5}{$\beta^+$};
  \vctxt{10}{-9.5}{$\gamma^-$};
  \vctxt{9.5}{-2}{$\delta^-$};
\end{tikzpicture}%
} \qquad \subfloat[Cluster $\Delta$]{%
\begin{tikzpicture}[x=1.95mm,y=1.95mm]
  \hexnineB{};
  \markpt{-3}{4};
  \markpt{0}{7};
  \markpt{1}{-7};
  \markpt{6}{-10};
  \markpt{11}{0};
  \markpt{12}{-8};
  \vctxt{-3.5}{0}{$\gamma^+$};
  \vctxt{-2}{6}{$\zeta^-$};
  \vctxt{3.5}{-11}{$\beta^+$};
  \vctxt{10}{-9.5}{$\epsilon^-$};
  \vctxt{5.5}{4.5}{$\gamma^-$};
  \vctxt{13.5}{-4}{$\alpha^+$};
\end{tikzpicture}%
} \qquad \subfloat[Cluster $\Theta$]{%
\begin{tikzpicture}[x=1.95mm,y=1.95mm]
  \hexnineB{};
  \markpt{-3}{4};
  \markpt{0}{6};
  \markpt{1}{-7};
  \markpt{6}{-10};
  \markpt{10}{0};
  \markpt{12}{-5};
  \vctxt{-3.5}{0}{$\gamma^+$};
  \vctxt{-2}{6}{$\beta^-$};
  \vctxt{3.5}{-11}{$\beta^+$};
  \vctxt{10}{-9.5}{$\theta^+$};
  \vctxt{5}{4}{$\eta$};
  \vctxt{13.5}{-3}{$\beta^+$};
\end{tikzpicture}%
} \\ \subfloat[Cluster $\Lambda$]{%
\begin{tikzpicture}[x=1.95mm,y=1.95mm]
  \hexnineB{};
  \markpt{-3}{4};
  \markpt{0}{6};
  \markpt{1}{-7};
  \markpt{6}{-10};
  \markpt{11}{0};
  \markpt{12}{-8};
  \vctxt{-3.5}{0}{$\gamma^+$};
  \vctxt{-2}{6}{$\beta^-$};
  \vctxt{3.5}{-11}{$\beta^+$};
  \vctxt{10}{-9.5}{$\epsilon^-$};
  \vctxt{5.5}{4.5}{$\theta^-$};
  \vctxt{13.5}{-4}{$\alpha^+$};
\end{tikzpicture}%
} \qquad \subfloat[Cluster $\Xi$]{%
\begin{tikzpicture}[x=1.95mm,y=1.95mm]
  \hexnineB{};
  \markpt{-3}{4};
  \markpt{-2}{-4};
  \markpt{0}{6};
  \markpt{6}{-10};
  \markpt{10}{0};
  \markpt{12}{-5};
  \vctxt{-3.5}{0}{$\alpha^-$};
  \vctxt{2}{-9}{$\epsilon^+$};
  \vctxt{-2}{6}{$\beta^-$};
  \vctxt{10}{-9.5}{$\theta^+$};
  \vctxt{5}{4}{$\eta$};
  \vctxt{13.5}{-3}{$\beta^+$};
\end{tikzpicture}%
} \qquad \subfloat[Cluster $\Pi$]{%
\begin{tikzpicture}[x=1.95mm,y=1.95mm]
  \hexnineB{};
  \markpt{-3}{4};
  \markpt{-2}{-4};
  \markpt{0}{6};
  \markpt{6}{-10};
  \markpt{11}{0};
  \markpt{12}{-8};
  \vctxt{-3.5}{0}{$\alpha^-$};
  \vctxt{2}{-9}{$\epsilon^+$};
  \vctxt{-2}{6}{$\beta^-$};
  \vctxt{10}{-9.5}{$\epsilon^-$};
  \vctxt{5.5}{4.5}{$\theta^-$};
  \vctxt{13.5}{-4}{$\alpha^+$};
\end{tikzpicture}%
} \\ \subfloat[Cluster $\Sigma$]{%
\begin{tikzpicture}[x=1.95mm,y=1.95mm]
  \hexnineB{};
  \markpt{-2}{-1};
  \markpt{0}{7};
  \markpt{1}{-7};
  \markpt{6}{-10};
  \markpt{11}{0};
  \markpt{12}{-8};
  \vctxt{-3}{-3}{$\zeta^+$};
  \vctxt{-3.5}{5}{$\delta^+$};
  \vctxt{3.5}{-11}{$\beta^+$};
  \vctxt{10}{-9.5}{$\epsilon^-$};
  \vctxt{5.5}{4.5}{$\gamma^-$};
  \vctxt{13.5}{-4}{$\alpha^+$};
\end{tikzpicture}%
} \qquad \subfloat[Cluster $\Phi$]{%
\begin{tikzpicture}[x=1.95mm,y=1.95mm]
  \hexnineB{};
  \markpt{-3}{4};
  \markpt{0}{6};
  \markpt{1}{-7};
  \markpt{6}{-10};
  \markpt{10}{0};
  \markpt{12}{-8};
  \vctxt{-3.5}{0}{$\gamma^+$};
  \vctxt{-2}{6}{$\beta^-$};
  \vctxt{3.5}{-11}{$\beta^+$};
  \vctxt{10}{-9.5}{$\epsilon^-$};
  \vctxt{5}{4}{$\eta$};
  \vctxt{13.5}{-4}{$\epsilon^+$};
\end{tikzpicture}%
} \qquad \subfloat[Cluster $\Psi$]{%
\begin{tikzpicture}[x=1.95mm,y=1.95mm]
  \hexnineB{};
  \markpt{-3}{4};
  \markpt{-2}{-4};
  \markpt{0}{6};
  \markpt{6}{-10};
  \markpt{10}{0};
  \markpt{12}{-8};
  \vctxt{-3.5}{0}{$\alpha^-$};
  \vctxt{2}{-9}{$\epsilon^+$};
  \vctxt{-2}{6}{$\beta^-$};
  \vctxt{10}{-9.5}{$\epsilon^-$};
  \vctxt{5}{4}{$\eta$};
  \vctxt{13.5}{-4}{$\epsilon^+$};
\end{tikzpicture}
}%
\end{center}
\caption{The nine marked clusters of hats and turtles.  These clusters are combinatorially equivalent to the marked hexagons of Figure~\ref{fig:clustershex}, and to the marked Spectres of Figure~\ref{fig:markedSpectres}. The markings record the possible ways the unmarked versions of the clusters in Figure~\ref{fig:thespectretiles} can fit together in tilings. These clusters define supertiles for substitution rules on the marked Spectres, which may be coloured the same way as the combinatorially equivalent rules of Figure~\ref{fig:supertiles}.}
\label{fig:clusters}
\end{figure}
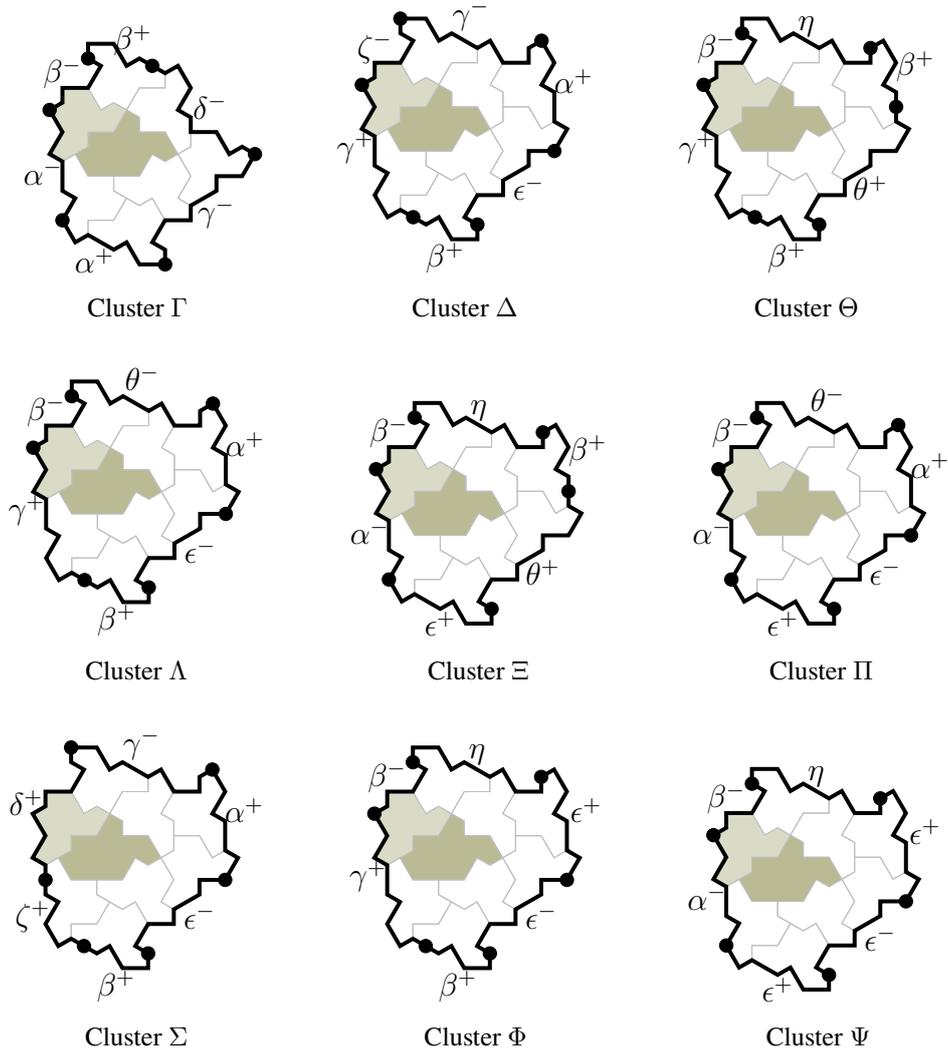


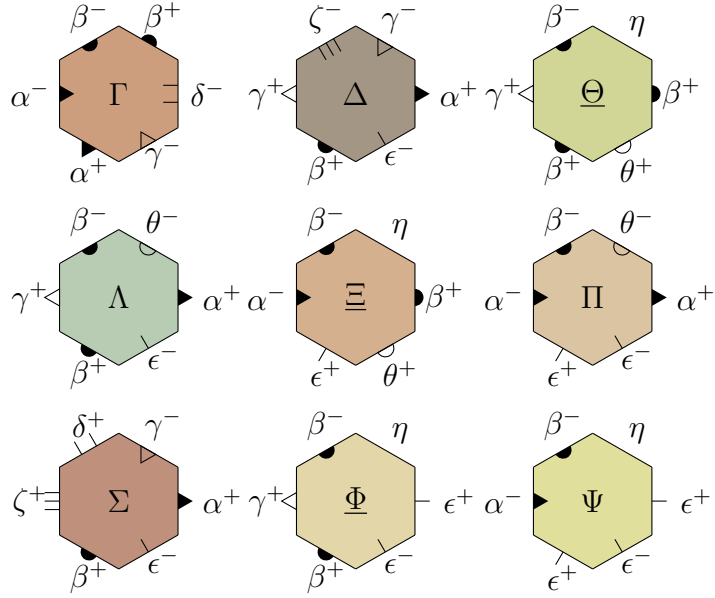
\begin{figure}[ht!]
\begin{center}
\begin{tikzpicture}[x=4.5mm,y=4.5mm]
 \hexGamma{0}{0}{0};
  \vctxt{-1}{1.5}{$\alpha^-$};
  \vctxt{1}{-1.75}{$\alpha^+$};
  \vctxt{3.5}{-2.5}{$\gamma^-$};
  \vctxt{5}{-1.5}{$\delta^-$};
  \vctxt{3.5}{1.5}{$\beta^+$};
  \vctxt{1}{2.75}{$\beta^-$};
  \hexDelta{0}{2}{0};
  \vctxt{7}{-2.5}{$\gamma^+$};
  \vctxt{9}{-5.75}{$\beta^+$};
  \vctxt{11.5}{-6.5}{$\epsilon^-$};
  \vctxt{13.5}{-5.75}{$\alpha^+$};
  \vctxt{11.5}{-2.5}{$\gamma^-$};
  \vctxt{9}{-1.25}{$\zeta^-$};
  \hexTheta{0}{4}{0};
  \vctxt{15}{-6.5}{$\gamma^+$};
  \vctxt{17}{-9.75}{$\beta^+$};
  \vctxt{19.5}{-11}{$\theta^+$};
  \vctxt{21}{-9.5}{$\beta^+$};
  \vctxt{19.5}{-6.75}{$\eta$};
  \vctxt{17}{-5.25}{$\beta^-$};
  \hexLambda{0}{1}{-2};
  \vctxt{-1}{-4.5}{$\gamma^+$};
  \vctxt{1}{-7.75}{$\beta^+$};
  \vctxt{3.5}{-8.5}{$\epsilon^-$};
  \vctxt{5.5}{-7.75}{$\alpha^+$};
  \vctxt{3.5}{-4.5}{$\theta^-$};
  \vctxt{1}{-3.25}{$\beta^-$};
  \hexXi{0}{3}{-2};
  \vctxt{7}{-8.5}{$\alpha^-$};
  \vctxt{9}{-11.75}{$\epsilon^+$};
  \vctxt{11.5}{-13}{$\theta^+$};
  \vctxt{13}{-11.5}{$\beta^+$};
  \vctxt{11.5}{-8.75}{$\eta$};
  \vctxt{9}{-7.25}{$\beta^-$};
  \hexPi{0}{5}{-2};
  \vctxt{15}{-12.5}{$\alpha^-$};
  \vctxt{17}{-15.75}{$\epsilon^+$};
  \vctxt{19.5}{-16.5}{$\epsilon^-$};
  \vctxt{21.5}{-15.75}{$\alpha^+$};
  \vctxt{19.5}{-12.5}{$\theta^-$};
  \vctxt{17}{-11.25}{$\beta^-$};
  \hexSigma{0}{2}{-4};
  \vctxt{-1}{-10.5}{$\zeta^+$};
  \vctxt{1}{-13.75}{$\beta^+$};
  \vctxt{3.5}{-14.5}{$\epsilon^-$};
  \vctxt{5.5}{-13.75}{$\alpha^+$};
  \vctxt{3.5}{-10.5}{$\gamma^-$};
  \vctxt{1}{-9.25}{$\delta^+$};
  \hexPhi{0}{4}{-4};
  \vctxt{7}{-14.5}{$\gamma^+$};
  \vctxt{9}{-17.75}{$\beta^+$};
  \vctxt{11.5}{-18.5}{$\epsilon^-$};
  \vctxt{13.5}{-17.75}{$\epsilon^+$};
  \vctxt{11.5}{-14.75}{$\eta$};
  \vctxt{9}{-13.25}{$\beta^-$};
  \hexPsi{0}{6}{-4};
  \vctxt{15}{-18.5}{$\alpha^-$};
  \vctxt{17}{-21.75}{$\epsilon^+$};
  \vctxt{19.5}{-22.5}{$\epsilon^-$};
  \vctxt{21.5}{-21.75}{$\epsilon^+$};
  \vctxt{19.5}{-18.75}{$\eta$};
  \vctxt{17}{-17.25}{$\beta^-$};
\end{tikzpicture}
\end{center}
\caption{Nine marked hexagons, combinatorially equivalent to the marked 
	clusters of Figure~\ref{fig:clusters}.  Hexagon edges are marked with
	the same lowercase Greek letters as the clusters; edges are also 
	decorated with inward- or outward-facing geometric markings that must
	agree for adjacent hexagons.}
\label{fig:clustershex}
\end{figure}

The first cluster, $\Gamma$, consists of eight tiles: one turtle
surrounded by seven hats.  The other eight clusters all consist of
the same arrangement of nine tiles, namely a $\Gamma$ cluster with
one additional hat.  These eight nine-tile clusters are distinguished based
on the locations marked for their tiling vertices in a tiling by
clusters.  
Edge segments with the same letter and opposite signs (or no sign) 
must adjoin each other on adjacent tiles.
The nine clusters taken together can be regarded abstractly
(and concretely, using combinatorial equivalence) as regular
hexagons with marked edges, as visualized in Figure~\ref{fig:clustershex}.

 \begin{proof}[Proof of Theorem~\ref{thm:clusters89}]
We first assume without loss
of generality that the tiling by Spectres and the corresponding tiling
by hats and turtles consist entirely of unreflected tiles. If all tiles
are reflected, we reflect the whole tiling, perform the analysis below,
and reflect the resulting clusters to match the original tiling.

Our proof is based on a computer-assisted case analysis similar to
the one used by Smith et~al.\ to establish the aperiodicity of the
hat~\cite[Appendix~B]{hat}.  This analysis depends on computing
``reduced lists of $1$-patches'' for sets of tiles, by first
generating a large list of candidate $1$-patches (defined below),
and then ``reducing'' the list by eliminating those patches that
cannot occur in tilings.

Given a set of tiles, define a \emph{$0$-patch} to be a single tile.
Let $\mathcal{P}$ be a $0$-patch and let 
$\mathcal{S}$ be a set of tiles whose interiors
are pairwise disjoint with each other and with $\mathcal{P}$.  If every tile
of $\mathcal{S}$ is a neighbour of $\mathcal{P}$, and if the union of 
$\mathcal{P}$ and the tiles
in $\mathcal{S}$ is a topological disk with $\mathcal{P}$ in its interior, 
then we call
$\mathcal{S}$ a \emph{$1$-corona} of $\mathcal{P}$, and we call 
$\mathcal{P}\cup \mathcal{S}$ a 
\emph{$1$-patch}.  More generally, any $k$-patch may be completely
surrounded by a $(k+1)$-corona to yield a $(k+1)$-patch.
We use ``corona'' as a shorthand for ``$1$-corona''.

To construct a reduced list of $1$-patches for a given a set of tiles,
first generate a list of legal pairs of
neighbouring tiles, eliminating neighbour relationships that can
be determined not to be extendable to a tiling~\cite[Appendix~B.1]{hat}.
If we wish to restrict our attention to homochiral tilings, we also
discard all pairs of tiles with opposite handedness.  Next, generate
a complete list of legal $1$-patches of tiles, comprising all ways of 
surrounding a single central tile by a corona of tiles, yielding a patch in
which all neighbours are legal as computed above.

Let~$\mathcal{L}$ be a list of $1$-patches and~$\mathcal{P}$ a patch
in~$\mathcal{L}$.  We say that a tile~$T$ in~$\mathcal{P}$ is
``$\mathcal{L}$-extendable'' if we can augment~$\mathcal{P}$ with
tiles to complete a corona around~$T$ that matches one of the patches
in~$\mathcal{L}$, in such a way that neighbour pairs in the augmented
patch are all legal.  (It may also be possible to surround~$T$ with
other coronas not in~$\mathcal{L}$, but any such coronas are not
relevant here.)  The $1$-patch~$\mathcal{P}$ is $\mathcal{L}$-extendable
if all the tiles in its corona are.  Now let~$\mathcal{L}_1$ be the
complete list of legal $1$-patches.  Any patch in~$\mathcal{L}_1$
that is not $\mathcal{L}_1$-extendable cannot appear in a tiling.
We remove any non-extendable patches to produce a smaller list
$\mathcal{L}_2$.  We iterate this process, producing progressively
smaller lists of patches, until there are no further removals.  The
resulting reduced list contains all $1$-patches that occur in full
tilings, and possibly a few false positives do not interfere with
our analysis below.  In practice, we test whether $T$ is
$\mathcal{L}$-extendable not by attempting to build coronas around
it, but by superimposing the central tile of every patch in~$\mathcal{L}$
on it and testing for compatibility of the two overlapping patches.

In the next section, we will also use the extended notion of reduced lists
of $k$-patches for~$k\!>\!1$.  The computation is analogous to that of
reduced lists of $1$-patches.  A list of $k$-patches may be generated
from a list of $(k-1)$-patches by considering ways to surround the
tiles in the inner $1$-corona of each $(k-1)$-patch with one of the
$(k-1)$-patches, and then the resulting list may be reduced by
considering whether each tile in a patch can be compatibly surrounded
by one of the patches in the list.

By computing the reduced $1$-patches of a set consisting of an unreflected
hat and an unreflected turtle, allowing only neighbours of the same
handedness, we obtain (a superset of) the $1$-patches that occur in
homochiral tilings by hats and turtles.  We find that every reduced 
$1$-patch with a turtle at its centre has at most one hat in its
corona, with the exception of a single $1$-patch where a turtle is
surrounded entirely by hats.  Likewise, every $1$-patch with a hat 
at centre has at most one turtle in its corona, except for a
single $1$-patch containing a hat surrounded by turtles.

By considering a connected sequence of tiles from any tile in a
tiling to any other, it follows that if any turtle has a turtle
neighbour, then every hat is entirely surrounded by turtles that
have no other hat neighbour, and if any hat has a hat neighbour,
then every turtle is entirely surrounded by hats that have no other
turtle neighbour.  This behaviour can be seen in the patches of
Figure~\ref{figure:hatturtlespectre}, where turtles are distributed
sparsely among hats or vice versa. 
Every tiling by the Spectre is equivalent to one tiling of each of
these two types, 
depending on which edges have their lengths changed from $1$
to~$\sqrt{3}$.
Without loss of generality we
will work with sparse turtles embedded in a field of hats.

Henceforth we refer to the unique $1$-patch consisting of a turtle
surrounded by hats, shown in Figure~\ref{fig:turtle678hats}
(left), as T6H (``turtle and six hats''). 
Because this patch is unique in the reduced list of 
$1$-patches, and because copies of the patch cannot overlap (as a
hat has at most one turtle neighbour), homochiral
tilings by hats and turtles correspond to homochiral tilings by two
unreflected tiles: hats and copies of T6H.

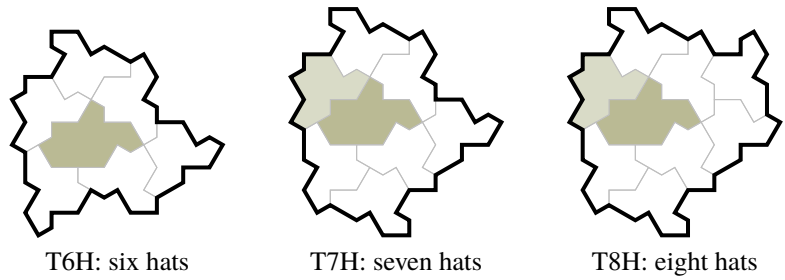
\begin{figure}[t]
\captionsetup{margin=0pt}%
\begin{center}
\subfloat[T6H: six hats]{%
\begin{tikzpicture}[x=2mm,y=2mm]
  \hexsevenB{};
\end{tikzpicture}%
} \qquad \subfloat[T7H: seven hats]{%
\begin{tikzpicture}[x=2mm,y=2mm]
  \hexeightB{};
\end{tikzpicture}%
} \qquad \subfloat[T8H: eight hats]{%
\begin{tikzpicture}[x=2mm,y=2mm]
  \hexnineB{};
\end{tikzpicture}%
}%
\end{center}
\caption{Clusters containing a turtle and six, seven or eight hats.}
\label{fig:turtle678hats}
\end{figure}

We now extend this analysis by constructing a reduced list of
$1$-patches from the set containing the hat and T6H.  In this list,
we discover that every $1$-patch with T6H at its centre must have
a hat neighbour in the appropriate position to form the cluster
T7H, consisting of a turtle and seven hats, shown in
Figure~\ref{fig:turtle678hats} (centre).  In a homochiral tilings by
hats and turtles, we already know that the copies of T6H identified
above cannot overlap.  So if two copies of T7H overlap, they must do
so by sharing a hat.\footnote{At this point, the tile T6H is
considered as an indivisible unit, not a cluster made up of hats and a
turtle.  So each T7H contains only a single hat that might potentially
overlap with a hat in another T7H.}  But the only way for them to share
a hat would be for the two copies of T7H to be identical.  Thus, copies
of T7H cannot
overlap either.  So we continue the grouping process, constructing
a reduced list of $1$-patches from the hat and T7H.  In this list,
every $1$-patch with the hat at its centre has a T7H neighbour in
the appropriate position to form the cluster T8H shown in
Figure~\ref{fig:turtle678hats} (right).  Copies of T7H and T8H
cannot overlap in a tiling by hats and turtles.  Finally, then,
in every homochiral tiling by hats and turtles the tiles can be composed
into copies of T7H and T8H.  As no arbitrary choices were made
in forming these clusters, the tiling has  precisely the same  symmetries as
the original tiling by hats and turtles. 

Next consider a reduced list of $1$-patches by T7H and T8H.  Classify
the central tile in such a $1$-patch according to the points on its
boundary that it shares with two of its neighbours (i.e., the patch's
tiling vertices).  This classification produces the nine marked clusters
listed in Figure~\ref{fig:clusters}.  The edge labels follow
immediately from the shapes of the boundary segments between the marked
vertices.  Each of these tiles has exactly six vertices of degree~$3$,
the central
tile in each $1$-patch has exactly six neighbours,
and the relation between the orientation of a
boundary segment and that of the corresponding edge in
Figure~\ref{fig:clustershex} is consistent.  Together, these observations
ensure that tilings by T7H and T8H are in bijection with combinatorially
equivalent tilings by the marked hexagons of Figure~\ref{fig:clustershex}.
\end{proof}

\FloatBarrier

\section{A substitution system for marked hexagons}
\label{sec:hexsubst}

In this section, we prove that any tiling admitted by the marked
hexagons of Figure~\ref{fig:clustershex} can be uniquely composed
into the supertiles of Figure~\ref{fig:supertiles}.  This composition
yields a unique hierarchy of level-$n$ supertiles for all~$n$, which 
forces any tiling by these
hexagons to be non-periodic.  We also observe that the supertiles
imply a substitution system that produces patches of hexagons of
any size, thus confirming that the hexagons tile the plane.

\begin{theorem}
\label{thm:subst}
The tiles in any tiling by the nine marked hexagons of
Figure~\ref{fig:clustershex} can be composed into the supertiles shown
in Figure~\ref{fig:supertiles}. The supertiles are combinatorially 
equivalent to reflected versions of the hexagons.  The tilings by supertiles 
and marked hexagons have the same symmetries.
\end{theorem}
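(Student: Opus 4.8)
The plan is to mirror the composition argument of Theorem~\ref{thm:clusters89}, but now operating one level up, on the nine marked hexagons of Figure~\ref{fig:clustershex} rather than on hats and turtles. Since each hexagon carries edge labels (the lowercase Greek letters with their signs) together with the inward/outward geometric decorations that must agree across shared edges, the first step is to enumerate the legal adjacencies: for each of the nine hexagon types and each of its six edges, the matching condition determines exactly which hexagon types (in which orientations) may sit across that edge. This yields a finite list of legal pairs, from which I would build a reduced list of $1$-patches—and, where a single corona is insufficient to force the local structure, reduced lists of $k$-patches for larger~$k$—using exactly the generate-and-reduce procedure set up in Section~\ref{sec:hexagons}.

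Next I would use these reduced patch lists to show that the supertile membership of each hexagon is forced and unique. Concretely, for each hexagon type I would verify that its local neighborhood, as constrained by the reduced patches, determines without ambiguity which supertile of Figure~\ref{fig:supertiles} it belongs to and what role it plays within that supertile. The crucial point is that this assignment must \emph{partition} the tiles: every hexagon lies in exactly one supertile, no hexagon is left unassigned, and no two supertiles overlap. As in the earlier composition, the argument proceeds by a computer-assisted case analysis over the finitely many reduced patches, checking in each case that the matching conditions leave only one admissible grouping.

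Once the composition is established, the two remaining claims reduce to finite verifications. To see that each supertile is combinatorially equivalent to a reflected hexagon, I would read off the edge labels along the outer boundary of each assembled supertile and confirm that the resulting marked boundary coincides with one of the nine hexagon types of Figure~\ref{fig:clustershex} after reflection; this is precisely the handedness-reversing behavior already anticipated in the discussion of Figure~\ref{fig:thespectretiles}. Finally, because the composition rule is canonical—it makes no arbitrary choices and is defined purely in terms of each hexagon's local neighborhood—it commutes with any isometry of the plane, so every symmetry of the hexagon tiling carries over to a symmetry of the supertile tiling, and conversely any symmetry of the supertile tiling permutes supertiles and hence permutes their constituent hexagons. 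The two symmetry groups therefore coincide.

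The main obstacle I expect is the uniqueness and well-definedness of the composition in the second step: showing that the matching conditions are restrictive enough that the grouping into supertiles is forced everywhere, with no leftover hexagons and no competing groupings. A single corona may not suffice to pin down some hexagons' roles, so the delicate part is determining how large a patch radius~$k$ must be and verifying exhaustively that the reduced $k$-patches admit a single consistent global partition. By contrast, the combinatorial equivalence to reflected hexagons and the preservation of symmetries should follow routinely once the composition itself is pinned down.
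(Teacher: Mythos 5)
Your proposal follows essentially the same route as the paper: a computer-assisted composition argument using reduced $k$-patches of the marked hexagons, showing the grouping is forced with no arbitrary choices, then reading off the reflected-hexagon combinatorics and deducing symmetry preservation from canonicity. The paper's only additional specifics are that it anchors each supertile on a $\Gamma$~hexagon (first the six fixed neighbours, then the optional seventh) and that reduced $5$-patches suffice to make every assignment unique---details you correctly flagged as the part needing exhaustive verification.
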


\begin{figure}[htp!]
\captionsetup{margin=0pt}%
\begin{center}
\subfloat[Supertile $\Gamma$]{%
\begin{tikzpicture}[x=3.75mm,y=3.75mm]
  \hexGamma{0}{0}{0};
  \hexSigma{60}{1}{0};
  \hexPhi{120}{0}{1};
  \hexXi{60}{1}{1};
  \hexPi{240}{0}{-1};
  \hexDelta{300}{1}{-1};
  \hexTheta{0}{2}{-1};
  \markpt{4}{4};
  \markpt{0}{2};
  \markpt{-2}{0};
  \markpt{0}{-4};
  \markpt{10}{-8};
  \markpt{10}{0};
  \vctxt{1}{3}{$\alpha^-$};
  \vctxt{-1}{1}{$\beta^-$};
  \vctxt{-3}{-2}{$\beta^+$};
  \vctxt{5}{-7}{$\delta^-$};
  \vctxt{9}{-3.5}{$\gamma^-$};
  \vctxt{7}{2.75}{$\alpha^+$};
\end{tikzpicture}%
} \qquad \subfloat[Supertile $\Delta$]{%
\begin{tikzpicture}[x=3.75mm,y=3.75mm]
  \hexGamma{0}{0}{0};
  \hexSigma{60}{1}{0};
  \hexPhi{120}{0}{1};
  \hexPi{60}{1}{1};
  \hexXi{240}{0}{-1};
  \hexDelta{300}{1}{-1};
  \hexPhi{0}{2}{-1};
  \hexXi{300}{2}{-2};
  \markpt{8}{2};
  \markpt{0}{2};
  \markpt{-2}{-2};
  \markpt{6}{-10};
  \markpt{10}{-8};
  \markpt{10}{-2};
  \vctxt{3}{5}{$\gamma^+$};
  \vctxt{-1}{1}{$\zeta^-$};
  \vctxt{2}{-5}{$\gamma^-$};
  \vctxt{9}{-9}{$\alpha^+$};
  \vctxt{9}{-3.5}{$\epsilon^-$};
  \vctxt{10}{1}{$\beta^+$};
\end{tikzpicture}%
} \qquad \subfloat[Supertile $\Theta$]{%
\begin{tikzpicture}[x=3.75mm,y=3.75mm]
  \hexGamma{0}{0}{0};
  \hexSigma{60}{1}{0};
  \hexPhi{120}{0}{1};
  \hexPi{60}{1}{1};
  \hexPsi{240}{0}{-1};
  \hexDelta{300}{1}{-1};
  \hexPhi{0}{2}{-1};
  \hexPi{300}{2}{-2};
  \markpt{8}{2};
  \markpt{0}{2};
  \markpt{-2}{0};
  \markpt{4}{-8};
  \markpt{8}{-10};
  \markpt{10}{-2};
  \vctxt{3}{5}{$\gamma^+$};
  \vctxt{-1}{1}{$\beta^-$};
  \vctxt{2}{-5}{$\eta$};
  \vctxt{6}{-11}{$\beta^+$};
  \vctxt{9}{-3.5}{$\theta^+$};
  \vctxt{10}{1}{$\beta^+$};
\end{tikzpicture}%
} \\ \subfloat[Supertile $\Lambda$]{%
\begin{tikzpicture}[x=3.75mm,y=3.75mm]
  \hexGamma{0}{0}{0};
  \hexSigma{60}{1}{0};
  \hexPhi{120}{0}{1};
  \hexPi{60}{1}{1};
  \hexPsi{240}{0}{-1};
  \hexDelta{300}{1}{-1};
  \hexPhi{0}{2}{-1};
  \hexXi{300}{2}{-2};
  \markpt{8}{2};
  \markpt{0}{2};
  \markpt{-2}{0};
  \markpt{6}{-10};
  \markpt{10}{-8};
  \markpt{10}{-2};
  \vctxt{3}{5}{$\gamma^+$};
  \vctxt{-1}{1}{$\beta^-$};
  \vctxt{2}{-5}{$\theta^-$};
  \vctxt{9}{-9}{$\alpha^+$};
  \vctxt{9}{-3.5}{$\epsilon^-$};
  \vctxt{10}{1}{$\beta^+$};
\end{tikzpicture}%
} \qquad \subfloat[Supertile $\Xi$]{%
\begin{tikzpicture}[x=3.75mm,y=3.75mm]
  \hexGamma{0}{0}{0};
  \hexSigma{60}{1}{0};
  \hexPhi{120}{0}{1};
  \hexPsi{60}{1}{1};
  \hexPsi{240}{0}{-1};
  \hexDelta{300}{1}{-1};
  \hexPhi{0}{2}{-1};
  \hexPi{300}{2}{-2};
  \markpt{4}{4};
  \markpt{0}{2};
  \markpt{-2}{0};
  \markpt{4}{-8};
  \markpt{8}{-10};
  \markpt{10}{-2};
  \vctxt{1}{3}{$\alpha^-$};
  \vctxt{-1}{1}{$\beta^-$};
  \vctxt{2}{-5}{$\eta$};
  \vctxt{6}{-11}{$\beta^+$};
  \vctxt{9}{-3.5}{$\theta^+$};
  \vctxt{6}{3}{$\epsilon^+$};
\end{tikzpicture}%
} \qquad \subfloat[Supertile $\Pi$]{%
\begin{tikzpicture}[x=3.75mm,y=3.75mm]
  \hexGamma{0}{0}{0};
  \hexSigma{60}{1}{0};
  \hexPhi{120}{0}{1};
  \hexPsi{60}{1}{1};
  \hexPsi{240}{0}{-1};
  \hexDelta{300}{1}{-1};
  \hexPhi{0}{2}{-1};
  \hexXi{300}{2}{-2};
  \markpt{4}{4};
  \markpt{0}{2};
  \markpt{-2}{0};
  \markpt{6}{-10};
  \markpt{10}{-8};
  \markpt{10}{-2};
  \vctxt{1}{3}{$\alpha^-$};
  \vctxt{-1}{1}{$\beta^-$};
  \vctxt{2}{-5}{$\theta^-$};
  \vctxt{9}{-9}{$\alpha^+$};
  \vctxt{9}{-3.5}{$\epsilon^-$};
  \vctxt{6}{3}{$\epsilon^+$};
\end{tikzpicture}%
} \\ \subfloat[Supertile $\Sigma$]{%
\begin{tikzpicture}[x=3.75mm,y=3.75mm]
  \hexGamma{0}{0}{0};
  \hexSigma{60}{1}{0};
  \hexLambda{120}{0}{1};
  \hexPi{60}{1}{1};
  \hexXi{240}{0}{-1};
  \hexDelta{300}{1}{-1};
  \hexPhi{0}{2}{-1};
  \hexXi{300}{2}{-2};
  \markpt{8}{2};
  \markpt{2}{4};
  \markpt{-2}{-2};
  \markpt{6}{-10};
  \markpt{10}{-8};
  \markpt{10}{-2};
  \vctxt{6}{3}{$\zeta^+$};
  \vctxt{-1}{1}{$\delta^+$};
  \vctxt{2}{-5}{$\gamma^-$};
  \vctxt{9}{-9}{$\alpha^+$};
  \vctxt{9}{-3.5}{$\epsilon^-$};
  \vctxt{10}{1}{$\beta^+$};
\end{tikzpicture}%
} \qquad \subfloat[Supertile $\Phi$]{%
\begin{tikzpicture}[x=3.75mm,y=3.75mm]
  \hexGamma{0}{0}{0};
  \hexSigma{60}{1}{0};
  \hexPhi{120}{0}{1};
  \hexPi{60}{1}{1};
  \hexPsi{240}{0}{-1};
  \hexDelta{300}{1}{-1};
  \hexPhi{0}{2}{-1};
  \hexPsi{300}{2}{-2};
  \markpt{8}{2};
  \markpt{0}{2};
  \markpt{-2}{0};
  \markpt{4}{-8};
  \markpt{10}{-8};
  \markpt{10}{-2};
  \vctxt{3}{5}{$\gamma^+$};
  \vctxt{-1}{1}{$\beta^-$};
  \vctxt{2}{-5}{$\eta$};
  \vctxt{9}{-9}{$\epsilon^+$};
  \vctxt{9}{-3.5}{$\epsilon^-$};
  \vctxt{10}{1}{$\beta^+$};
\end{tikzpicture}%
} \qquad \subfloat[Supertile $\Psi$]{%
\begin{tikzpicture}[x=3.75mm,y=3.75mm]
  \hexGamma{0}{0}{0};
  \hexSigma{60}{1}{0};
  \hexPhi{120}{0}{1};
  \hexPsi{60}{1}{1};
  \hexPsi{240}{0}{-1};
  \hexDelta{300}{1}{-1};
  \hexPhi{0}{2}{-1};
  \hexPsi{300}{2}{-2};
  \markpt{4}{4};
  \markpt{0}{2};
  \markpt{-2}{0};
  \markpt{4}{-8};
  \markpt{10}{-8};
  \markpt{10}{-2};
  \vctxt{1}{3}{$\alpha^-$};
  \vctxt{-1}{1}{$\beta^-$};
  \vctxt{2}{-5}{$\eta$};
  \vctxt{9}{-9}{$\epsilon^+$};
  \vctxt{9}{-3.5}{$\epsilon^-$};
  \vctxt{6}{3}{$\epsilon^+$};
\end{tikzpicture}%
}%
\end{center}
\caption{Nine supertiles.  Each is drawn in the reverse handedness of the corresponding marked hexagon of Figure~\ref{fig:clustershex}, preserving the handedness of the marked hexagons within it.}
\label{fig:supertiles}
\end{figure}
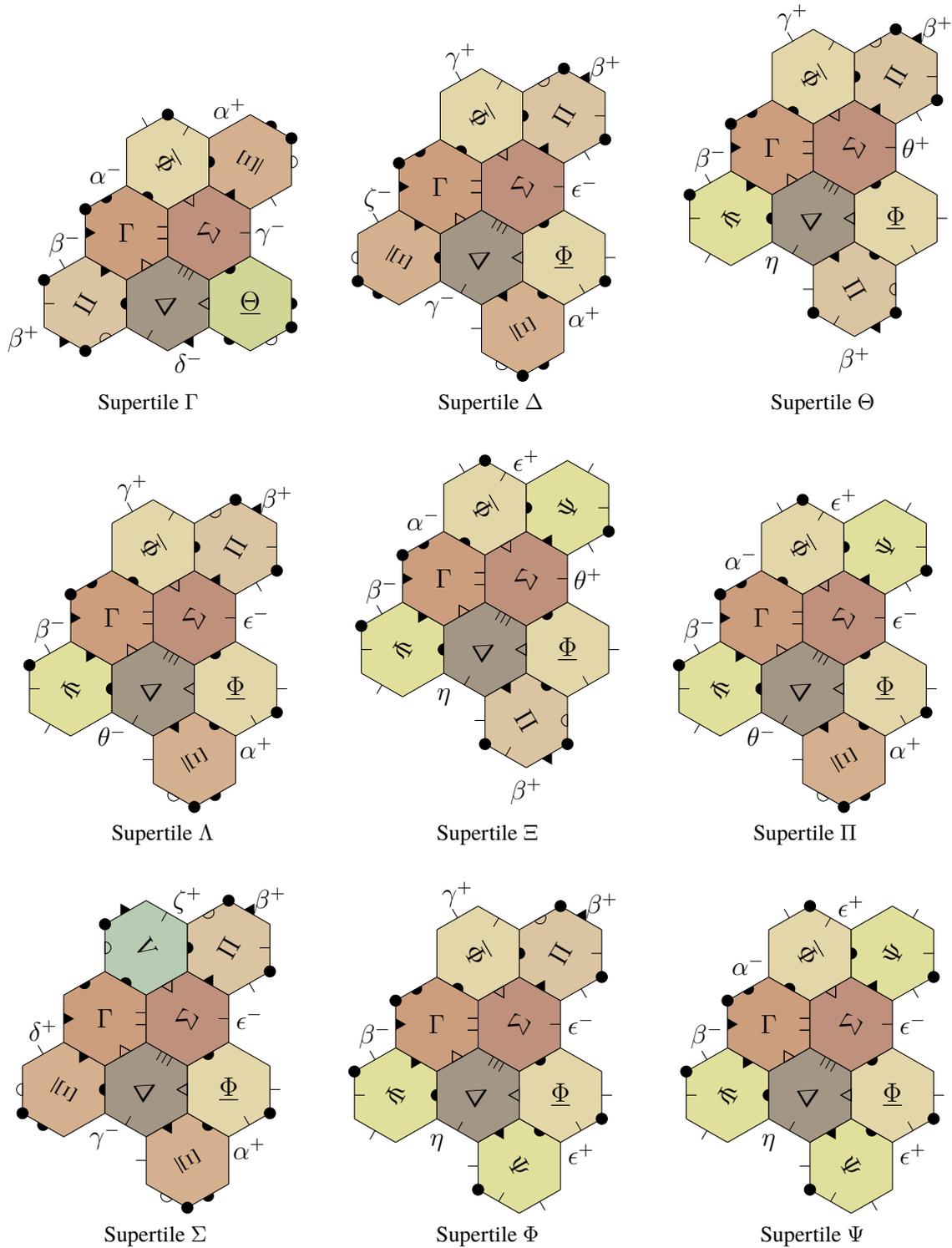

\begin{proof}
Each supertile in Figure~\ref{fig:supertiles}
contains a $\Gamma$~hexagon and either six or seven other hexagons:
Supertile~$\Gamma$ contains six other hexagons, while all the other
supertiles contain hexagons in those six positions plus a seventh
hexagon.  Each supertile is annotated with the positions of its
vertices and  markings on the edges between those vertices.  By
inspection,  the edge markings indicate portions of the supertile
boundaries that match if and only if the corresponding marked edges
of the original marked hexagons match.  Furthermore, exactly three
supertiles must meet at each vertex. Thus the supertiles are indeed
combinatorially equivalent to reflected versions of the marked
hexagons.

To prove Theorem~\ref{thm:subst}, it suffices then to give rules to
compose the marked hexagons of any tiling into the nine supertiles
shown, such that vertices are in the positions marked, with no
arbitrary choices involved in those rules (so that the tiling by
supertiles necessarily has the same symmetries as the original tiling
by marked hexagons).  The lack of a translation as a symmetry then
follows from the sizes of supertiles after $n$~composition steps going
to infinity with~$n$, while the substitution structure implies that
the marked hexagons tile arbitrarily large finite regions of the
plane, and thus the whole plane.

We accrete one supertile around each $\Gamma$~hexagon.  First assign to that
supertile the $\Gamma$~hexagon itself and the six other hexagons in
the positions relative to that~$\Gamma$ that appear in all of the
illustrated supertiles.  By constructing a reduced list of $5$-patches by
the marked hexagons, we can confirm that this assignment associates each marked 
hexagon in any tiling with at most one supertile, but leaves some hexagons
unassigned.  In the eight supertiles other than
Supertile~$\Gamma$, the remaining hexagon is in a fixed position
relative to the~$\Gamma$ hexagon.  For each unassigned hexagon, assign
it to the supertile where it is in the right position relative to that
supertile's $\Gamma$~hexagon; again, examining a reduced list of
$5$-patches confirms that every unassigned hexagon is assigned by
this rule to exactly one supertile.

It remains to show that the supertiles resulting from these assignment
rules are exactly those shown in Figure~\ref{fig:supertiles} (i.e.,
that they contain marked hexagons in the positions and orientations
illustrated, and have vertices in the indicated positions).  For
this step, consider those reduced $5$-patches with $\Gamma$~hexagons at
their centres.  Examining the list of such patches shows that there
are exactly the nine possibilities illustrated
for the labels and orientations of all the tiles
in the supertile of that~$\Gamma$.  Furthermore, for each such
possibility, there is a unique division of the hexagons surrounding
that supertile into their own supertiles, which determines the
locations of the vertices of the supertile (this is the step that
requires $5$-patches, to assign those neighbouring hexagons to their
own supertiles, rather than working with $k$-patches for some $k <
5$).  That unique choice of the locations of the vertices is the one
shown in Figure~\ref{fig:supertiles}.
\end{proof}

\begin{figure}[t]
\centerline{\includegraphics[width= \textwidth]{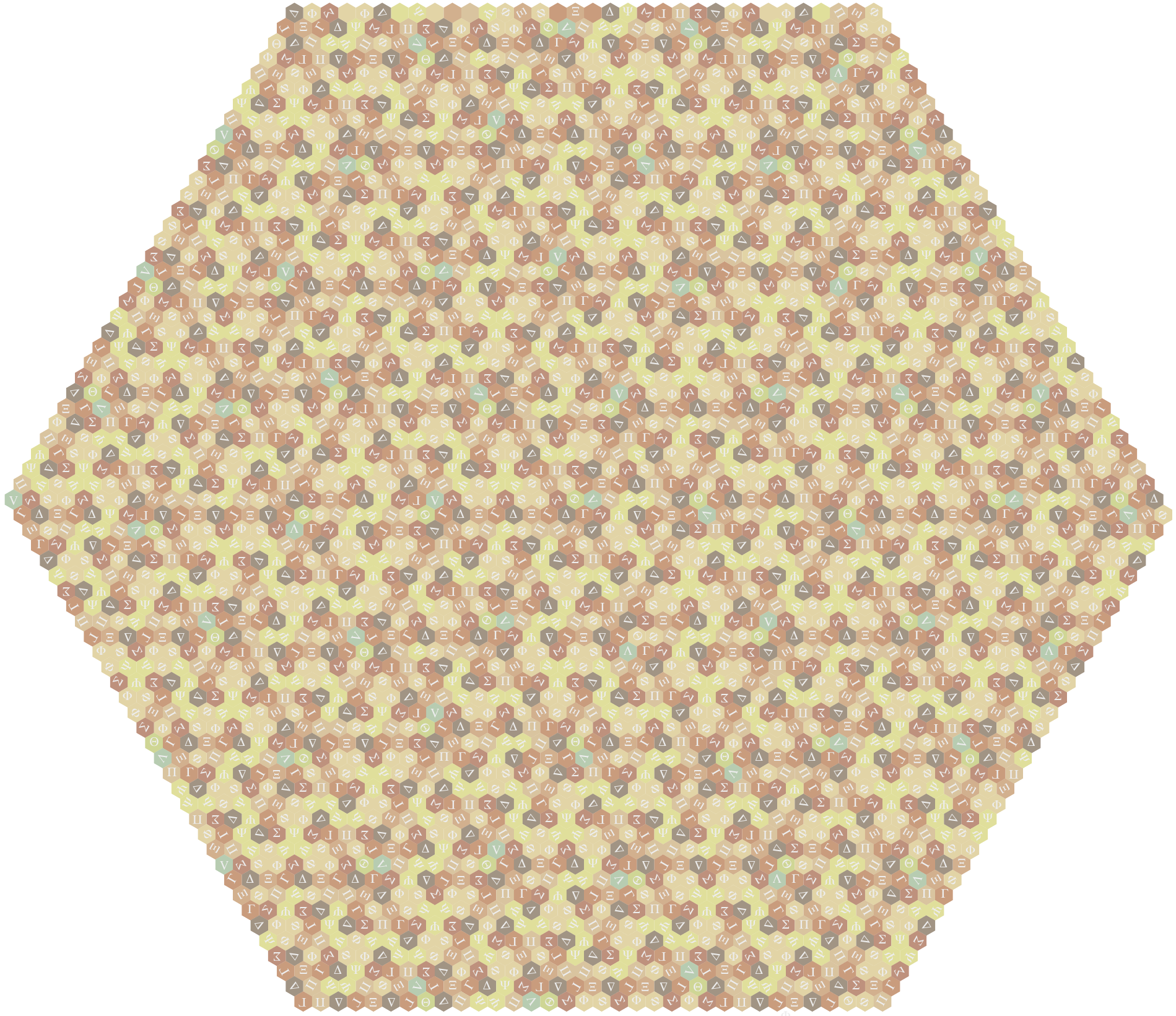}
}
\caption{\label{fig:bighexpatch}A large patch of marked hexagons.}\end{figure}

Theorem~\ref{thm:subst} allows us to compose any tiling by marked
hexagons into a combinatorially equivalent tiling by supertiles.
By induction, we can then iterate this composition process to any
number of steps, gathering level-$1$ supertiles into level-$2$
super-supertiles, and so on, yielding level-$n$ supertiles for all~$n$.
Every marked hexagon in a tiling is therefore
nested in a unique way within an infinite hierarchy of level-$n$
supertiles.  It follows that any tiling
by the marked hexagons is non-periodic, which implies the non-periodicity
first of tilings by copies of~T7H and~T8H, then of homochiral tilings by 
hats and turtles, and finally of tilings
by Spectres, all through combinatorial equivalence.  

To complete a proof of aperiodicity, we must also show that any of
these families of shapes actually admits tilings.  Here we need
only ``reverse'' the composition process to obtain substitution
rules that replace each marked hexagon in Figure~\ref{fig:clustershex} 
by the corresponding supertile
in Figure~\ref{fig:supertiles}.  These rules are well-defined,
in the sense that they may be iterated to produce level-$n$ supertiles
for every $n\ge 0$, where the supertiles are considered as combinatorial
patches (collections of marked hexagons with associated adjacency
relations) rather than geometric ones (marked hexagons in particular
positions and orientations in the plane).  We can therefore construct
combinatorial patches of marked hexagons of any size; Figure~\ref{fig:bighexpatch} gives an example of a large patch.
From any such patch we can derive a combinatorial patch of Spectres,
which in the limit implies the existence of an unbounded combinatorial
patch.  Because of the consistency of edge labels and shapes of boundary
segments in Figure~\ref{fig:clusters}, and the consistency of the
relation between the orientation of a boundary segment and that of the
corresponding edge (as noted at the end of Section~\ref{sec:hexagons}),
every combinatorial edge and vertex arrangement that arises in that
patch can be realized geometrically, and so we may apply the geometry
of the marked hexagons, or the geometry of the Spectre clusters,
to any combinatorial patch to construct a geometric tiling of the
plane~\cite[Lemma~1.1]{regprod}.  Thus the Spectre tiles the plane,
which completes the proof of Theorem~\ref{maintheorem}: the Spectre
is a strictly chiral aperiodic monotile.

\begin{figure}[t]
\centerline{\includegraphics[width=.7\textwidth, trim = 0in .05in 0in .05in]{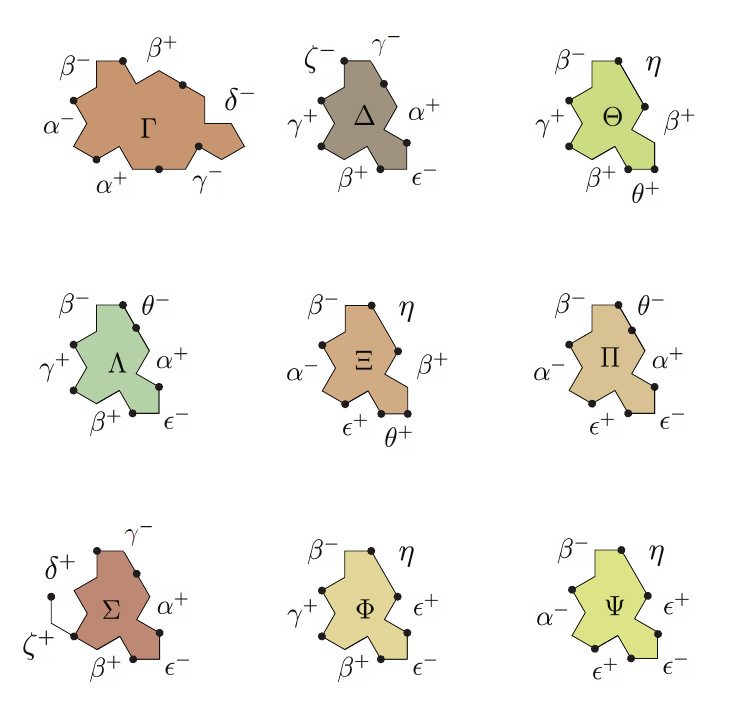}} 
\caption{
Marked Spectres that are combinatorially equivalent to the marked
hexagons of Figure~\ref{fig:clustershex}  and to the
clusters of Figure~\ref{fig:clusters}. The $\Sigma$~tile has
additional line segments drawn outside the tile, which should be
interpreted as the boundary of part of the tile with zero thickness;
the right-hand side of those segments forms part of the
$\delta^+$~edge, while the left-hand side is the $\zeta^+$~edge 
(similar segments arise in boundaries of the clusters forming
metatiles for the hat~\cite[Section~4]{hat}).
The labelled clusters define substitution rules on the marked
Spectres that are combinatorially equivalent to the substitution
rules on the marked hexagons, shown in Figure~\ref{fig:supertiles}.
}
\label{fig:markedSpectres}
\end{figure}

Most of our aperiodicity proof for the Spectre relies not on the
Spectres themselves, but on the marked clusters defined in
Figure~\ref{fig:clusters}.  It is convenient to position the proof
at this level: the clusters have the comparatively simple combinatorics
of regular hexagons, and our labelled edges make it easier to verify
that supertiles and clusters have combinatorially equivalent matching
rules.  However, it is also possible to take a step backwards
and annotate the Spectres themselves with these same markings.
Figure~\ref{fig:markedSpectres} shows a Mystic and eight Spectres,
with the same tile and edge markings as the associated marked
hexagons.  The tiling substitution rules on marked hexagons apply
equally to these marked Spectres.  Erasing the markings restores
the Spectre tiling substitution rules of Figure~\ref{fig:thespectretiles}.
Thus the Spectre enforces those substitution rules directly.



\section{Conclusion}
\label{sec:conclusion}

Many problems in tiling theory depend implicitly or explicitly on
an initial decision of when to consider two tiles in a tiling ``the
same''.  In the Euclidean plane, if this decision is not otherwise
articulated then we assume two tiles are the same if they can be
brought into coincidence through any planar isometry.  In this paper
we first answer the einstein problem in a world where sameness is
restricted to orientation-preserving isometries.  The polygon
$\mathrm{Tile}(1,1)$ is a \emph{weakly chiral aperiodic monotile}:
a shape that tiles aperiodically if only translations and rotations
are permitted.  Then, by modifying the edges of that polygon, we
obtain a class of shapes called Spectres, which are \emph{strictly
chiral aperiodic monotiles}: they tile aperiodically using tiles
of a single handedness, even when reflections are allowed.

Other variations of the einstein problem can be posed in which tiles
are restricted to images under any given group of isometries.  Each such
problem comes in weak and strict forms, as above.
In the weak case, we ask whether a shape's tilings are
non-periodic if tilings are restricted by fiat to the given isometries,
even if it would tile periodically when all isometries are allowed.  In
the strict case we require that by design, a shape does
not admit any tilings that use isometries outside the group.

If we restrict our attention to translations alone, then 
the work of Girault-Beauquier
and Nivat~\cite{GiraultBeauquierNivat} and
Kenyon~\cite{Kenyon,Kenyonerratum,Kenyon2} shows that a topological
disk with a tiling by translation cannot be an aperiodic monotile.
However, Greenfeld and Tao~\cite{GT1,GT2} and Greenfeld and
Kolountzakis~\cite{greenfeld2023tiling} showed that translational
aperiodic monotiles do exist in sufficiently high dimensions.
The next simplest case is where $180^\circ$~rotations are allowed
along with translations.
Schattschneider~\cite[Problem~18.E1]{shapingspace} posed the question
of whether any tile admitting such a tiling of the plane also
satisfies the Conway
criterion~\cite{gardnerconwaycriterion,conwaycriterion}, implying
that it must be isohedral.

Our work presents a single family of strictly chiral aperiodic
monotiles, which are all essentially the same up to trivial
modifications of tiling edges.  It would be interesting to search
for other weakly chiral or strictly chiral einsteins.
It would be particularly worthwhile to find (or disprove the existence
of) an aperiodic monotile with bilateral reflection symmetry, a
shape for which chirality becomes moot.

We showed in Lemma~\ref{existlemma} that we can replace the~$14$
edges of $\mathrm{Tile}(1,1)$ by suitably oriented copies of any
smooth path to construct a Spectre, provided the replacement results
in a tile with a non-self-intersecting boundary.  In fact, the
construction works generally for $C^1$ paths, which suffice to
force the vertices of the original polygon to meet each other in
tilings.  However, we leave open the question of whether some other
paths (e.g., piecewise-linear paths)  might permit different
tilings, or whether all non-trivial choices of path produce valid Spectres. 

As part of our proof of aperiodicity, we derived the chiral
marked hexagons of Figure~\ref{fig:clustershex}.  These hexagons
are meant to encode the combinatorics of clusters of Spectres,
but they display interesting properties of their own that may be
worthy of further study.  Figure~\ref{fig:bighexpatch} hints at
emergent patterns in tilings by marked hexagons. 

Some of the hexagons of Figure~\ref{fig:clustershex} have very
similar arrangements of markings; removing the distinctions between
some of the edge labels can produce a smaller set of marked hexagons
without reducing the set to a single trivially marked tile.  In
particular, combining $\Theta$, $\Xi$, $\Phi$ and~$\Psi$ into a
single marked hexagon, and combining $\Lambda$ and~$\Pi$ into another
marked hexagon, yields a smaller set of five marked hexagons, which
preliminary computations suggest might also be aperiodic.  It would
be of interest to understand what small aperiodic sets of chiral
marked hexagons are possible with this style of edge markings,
similar to the work of Jeandel and Rao~\cite{JeandelRao} on small
aperiodic sets of Wang tiles.



\section*{Acknowledgements}
\label{sec:acks}

Special thanks to Yoshiaki Araki, whose artistic explorations with the
equilateral (or nearly equilateral) member of the hat-turtle continuum
first inspired us to consider the tiling-theoretic properties of the
Spectre in detail.  Thanks also to Lucy Birkinshaw for pointing out an
error in one of the figures, to Jeff Wilson, Dave Keenan, and Doug
Blumeyer for suggesting improvements to the terminology used in the article.

\appendix

\section{Constructing Spectre tilings}
\label{spectresubstAppendix}

The methods used in this paper to prove the aperiodicity of
$\mathrm{Tile}(1,1)$ and Spectres rely heavily on layers of
combinatorial equivalence.  Because these methods do not require
geometrically rigid information about the locations of tiles, we
emerge at the end without a practical algorithm for drawing patches
of tiles. Such an algorithm would of course be useful for
visualization and artistic experimentation, and so in this appendix
we provide one.  This algorithm also forms the basis for an interactive
browser-based tool for constructing patches of Spectres, available
at~\href{https://cs.uwaterloo.ca/~csk/spectre/}{\nolinkurl{cs.uwaterloo.ca/~csk/spectre/}}.

In Section~\ref{sec:hexagons} we show, through a computer-assisted
analysis of cases, that any homochiral tiling by hats and turtles
must consist of a sparse arrangement of turtles surrounded by hats, or
sparse hats surrounded by turtles.
We then show that the sparse turtle tiling can be composed into copies
of two clusters, named T7H (``turtle and seven hats'') and T8H 
(``turtle and eight hats''), illustrated in Figure~\ref{fig:turtle678hats}.
Working backwards through the equivalence described in 
Section~\ref{sec:hatturtle}, from T7H and T8H we can recover corresponding
clusters of eight and nine Spectres.  
Figure~\ref{fig:thespectretiles} introduced these clusters,
rotated so that each contains a single ``odd'' Spectre
(shaded in dark green) surrounded by ``even'' Spectres.
The odd Spectres play a role similar to that played by reflected
hats in the hat tiling (except of course that they are not reflected):
they form a sparse subset, and each one is surrounded by a congruent
arrangement of neighbours.

These two clusters also form the basis for the substitution rules
shown in Figure~\ref{fig:thespectretiles}.  A corollary of the main
line of the paper is that these substitution rules can indeed 
produce patches of Spectres of any size.  We can also use these
rules to develop a practical drawing algorithm.  
As shown in Figure~\ref{fig:thespectretiles}, we combine one even
and one odd Spectre into a compound called a ``Mystic'', and define
substitution rules that replace Spectres and Mystics with clusters
of reflected tiles.

These rules are combinatorial---although they show the cluster that
replaces each Spectre or Mystic in a growing patch, they say nothing
about the exact location of that cluster.  To make the rules 
precise, we identify four ``key points'' on the boundary of the Spectre.  
We then use those points to determine the translations of the Spectres and
Mystics in substituted clusters, as well as next-generation key points on the
boundaries of those clusters.  These new key points are equivalent to the
originals, in that they can drive the same process of snapping clusters 
together into superclusters and the choice of their key points, and so on
through any number of generations.

We determined the locations of these key points through manual
experimentation.  We chose points on the boundaries of T7H and T8H
where three or more clusters meet, in such a way that every cluster
in a supercluster is connected to every other through a path of
neighbouring clusters with coincident key points (thus fully
determining the necessary translations mentioned above).  By comparing
the arrangement of Spectres in a cluster with the arrangement of
clusters in a supercluster, we could then deduce the corresponding
locations of key points on individual tiles.  In hindsight, the key
points used here are subsets of the marked dots in
Figures~\ref{fig:clusters} and~\ref{fig:supertiles}.  They are minimal
in the sense that we could have begun by distinguishing just two of the
marked dots as key points in each of the nine marked Spectres of
\fig{fig:markedSpectres}, and propagated those pairs of dots to the
nine cluster types and their superclusters.  Each of those pairs consists
of 
two of the four key points used here; operating with all four key points
at every stage avoids the complexity of tracking
nine cluster types.

\begin{figure}[t]
\begin{center}
	\includegraphics[width=\textwidth]{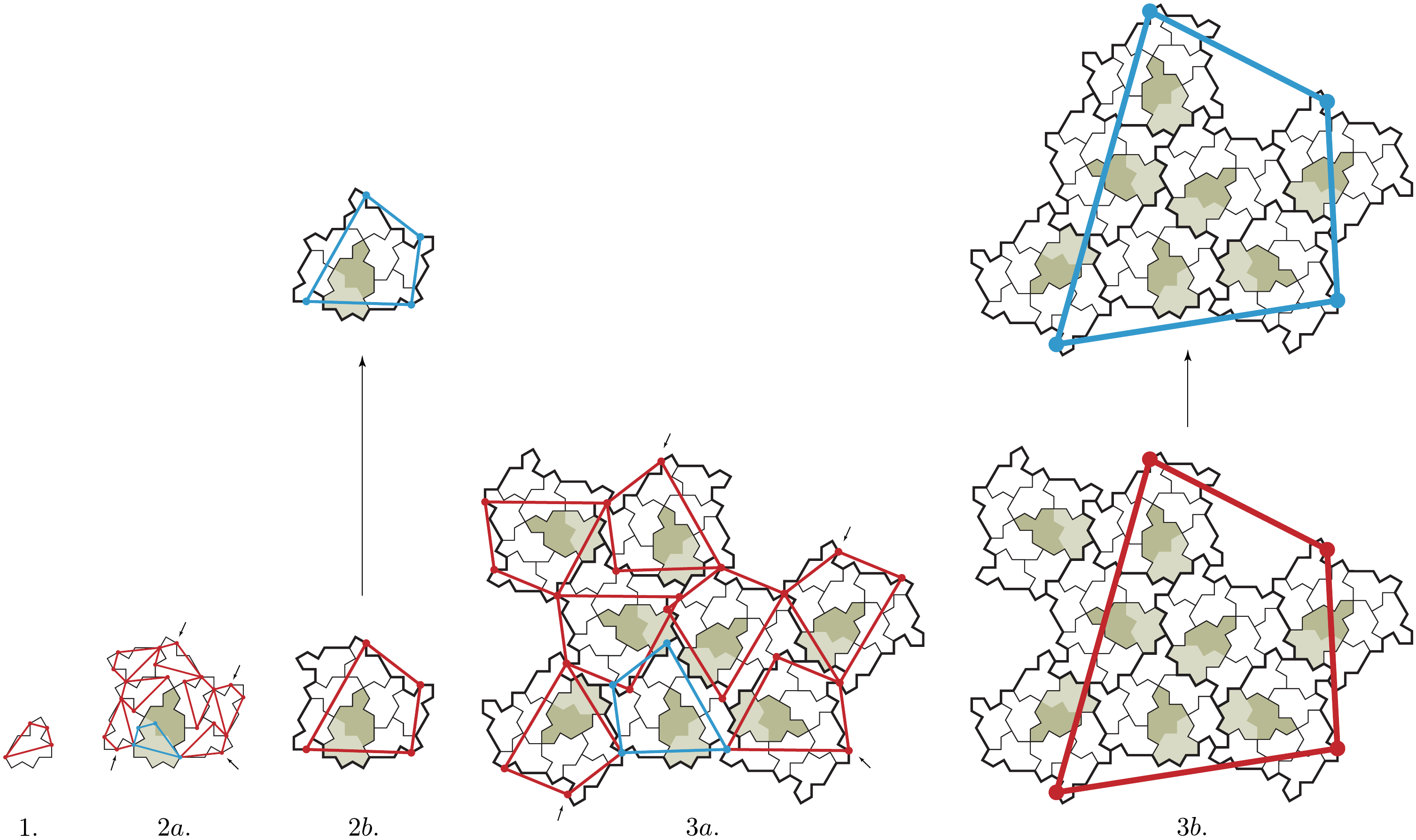}
\end{center}
\caption{\label{fig:patch_construction}
	A visualization of the algorithm for iterating Spectre and Mystic
	cluster construction.  A single Spectre is marked with four 
	``key points'' (Step~1), which guide the placement of Spectres and
	Mystics in the construction of clusters (Step~2a), as well as the
	choice of key points for those clusters (Step~2b).  The process
	may then be repeated to build superclusters (Steps~3a and~3b),
	super-superclusters, and so on.   Spectre clusters and Mystic clusters
	are marked with red points and blue points, respectively.}
\end{figure}

The construction is illustrated in Figure~\ref{fig:patch_construction}.
We begin by marking four
of the Spectre's vertices as key points in Step~1.  In the figure we 
draw the quadrilateral joining the points for visualization purposes
only; it is not explicitly needed in the construction.
We can then build a Spectre cluster as shown in Step~2a.  Note that the
sets of key points associated with the Mystic and Spectres form a closed
chain: each tile shares exactly two key points with neighbouring tiles.  We
can therefore construct the cluster by placing one Spectre at any desired
location and orientation as a starting point, and then gluing on neighbours 
by following the chain.
At each step we rotate the tile to be placed so it has the correct 
orientation relative to its predecessor as shown in Step~2a, and compute
the translation needed to bring the associated key points of the
neighbouring tiles into coincidence.
In Step~2a, four key points belonging to different Spectre tiles are indicated
with arrows; these become the key points of the Spectre cluster itself in 
Step~2b.  Note that we can also easily define the Mystic cluster by 
deleting a single even Spectre, as shown above the Spectre cluster.

We can then repeat this process, assembling Spectre and Mystic
clusters into a Spectre supercluster (Step~3a), and extracting the
supercluster's key points (Step~3b) as a subset of the key points
of the clusters that make it up, as was done to compute the key points
of those clusters.  By deleting one of the placed clusters, we also
construct the Mystic supercluster.  This construction can be iterated
as many times as desired to define a patch of Spectres of any
size.  Figure~\ref{fig:superdupertiles} shows five levels of Spectre
clusters.  Notice that at each level of this construction, individual
tiles have the opposite handedness of those in the previous level.

As with the metatiles used in the analysis of the hat~\cite{hat}, these
substitution rules cannot be expressed as a set of rigid motions
and uniform scalings that pack tiles into supertiles.  This behaviour
is evident from the fact that the quadrilaterals in
Figure~\ref{fig:patch_construction} are not related by similarities.
However, also like metatiles, they quickly converge to limit shapes.

\FloatBarrier


%
%

\bibliographystyle{alphaurl}
\bibliography{tilings}

\end{document}